\newfont{\teneufm}{eufm10}
\newfont{\seveneufm}{eufm7}
\newfont{\fiveeufm}{eufm5}
 \patchcmd\Gread@eps{\@inputcheck#1 }{\@inputcheck"#1"\relax}{}{}
\newtheorem{thm}{Theorem}
\newtheorem{lem}[thm]{Lemma}
\newtheorem{cor}[thm]{Corollary}
\newtheorem{rem}[thm]{Remark}
\newcommand{\Tr}{{\rm Tr}}
\newcommand{\Trn}{{\rm Tr}_n}
\newcommand{\Trd}{{\rm Tr}_d}
\def\+{\oplus}
\def\cN{{\mathcal N}}
\def\cP{{\mathcal P}}
\def\F{{\mathbb F}}
\def\00{{\bf 0}}
\def\11{{\bf 1}}
\def\+{\oplus}
\def\\{\cr}
\def\({\left(}
\def\){\right)}
\newcommand{\cardinality}[1]{\# #1}
\providecommand{\newoperator}[3]{%
  \newcommand*{#1}{\mathop{#2}#3}}
\newoperator{\FD}{\mathrm{FD}}{\nolimits}
\begin{document}
\title{\bf Characters, Weil sums and  $c$-differential uniformity with an application to  the perturbed Gold function}
\author{\Large Pantelimon~St\u anic\u a
$^1$ \and \Large Constanza Riera$^2$, Anton Tkachenko$^2$
\\ \\
%\thanks{$^*$Corresponding author}
%\thanks{P. St\u anic\u a is with 
$^1$Applied Mathematics Department, \\
Naval Postgraduate School, Monterey, USA; 
%\protect\\ 
{\tt pstanica@nps.edu}\\
$^2$Department of Computer Science,\\
 Electrical Engineering and Mathematical Sciences,\\
   Western Norway University of Applied Sciences,
  5020 Bergen, Norway;\\ {\tt \{csr, atk\}@hvl.no}\\}
%\markboth{IEEE Transactions on Information Theory,~Vol.~XX, No.~Y, Month~2020}
%{ P. St\u anic\u a:  $c$-Boomerang Uniformity}

\maketitle

\begin{abstract}
Building upon the observation that the newly defined~\cite{EFRST20} concept of $c$-differential uniformity  is not invariant under EA or CCZ-equivalence~\cite{SPRS20}, we showed in~\cite{SG20} that adding  some appropriate linearized monomials increases the $c$-differential uniformity of  the inverse function, significantly, for some~$c$. We continue that investigation here.
First, by analyzing the involved equations, we find bounds for the uniformity of the Gold function perturbed by a single monomial, exhibiting the discrepancy we previously observed on the inverse function.
Secondly, to treat the general case of perturbations via any linearized polynomial, we use characters in the finite field to express all entries in the $c$-Differential Distribution Table (DDT) of an $(n,n)$-function on the finite field $\F_{p^n}$, and further, we use that  method to find explicit expressions for all entries of the $c$-DDT of the perturbed Gold function (via an arbitrary linearized polynomial).
\end{abstract}
%\begin{IEEEkeywords}
{\bf Keywords:} 
Boolean and 
$p$-ary functions, 
$c$-differentials,  
differential uniformity, 
perfect and almost perfect $c$-nonlinearity,
perturbations
\newline
%\end{IEEEkeywords}
{\bf MSC 2020}: 06E30, 11T06, 94A60, 94C10.

%\IEEEpeerreviewmaketitle

\section{Introduction and basic definitions}

  Motivated by the challenge of~\cite{BCJW02}, who  extended the differential attack on some ciphers  by using a new type of differential,   we  defined in~\cite{EFRST20} a new differential and difference distribution table, in any characteristic, along with the corresponding perfect/almost perfect $c$-nonlinear functions, etc., (unbeknown to us, and developed independently, this is a generalization of the recent~\cite{BT} concept of quasi planarity: a quasi planar function is simply  a perfect $c$-nonlinear function for $c=-1$). We later extended the notion of boomerang connectivity table  in~\cite{S20} and characterized some of the known perfect nonlinear functions and the inverse function through this new concept. In~\cite{EFRST20,SPRS20,RS20,YZ20} various characterizations of the $c$-differential uniformity were found, and some of the known perfect and almost perfect nonlinear functions have been investigated.  An approach on boomerang uniformity based upon Weil sums and characters was developed in~\cite{S20_Weil}.  We will take a similar approach in this paper on $c$-differential uniformity, which has the advantage of providing some character expressions for all entries in the  $c$-Differential Distribution Table (defined below).

 While we only introduce here only some needed notation on Boolean (binary, $p=2$) and $p$-ary functions (where $p$ is an odd prime),  the reader can consult~\cite{Bud14,CH1,CH2,CS17,MesnagerBook,Tok15} for more   on cryptographic Boolean functions and their properties.

Let $p$ be a prime number and $n$ be a positive integer $n$. We let $\F_{p^n}$ be the  finite field with $p^n$ elements, and $\F_{p^n}^*=\F_{p^n}\setminus\{0\}$ be the multiplicative group; for $a\neq 0$, we often write $\frac{1}{a}$ to mean the inverse of $a$ in the multiplicative group. We let $\F_p^n$ be the $n$-dimensional vector space over $\F_p$.  We use $\cardinality{S},\bar S$ to denote the cardinality of a set $S$, respectively, the complement of $S$ in a superset (usually, $\F_{p^n}$), which will be clear from the context. Also, for a complex number  $z$, we denote by $\bar z$   its complex conjugate.

We call a function from $\F_{p^n}$ (or $\F_p^n$) to $\F_p$  a {\em $p$-ary  function} on $n$ variables. For positive integers $n$ and $m$, any map $F:\F_{p^n}\to\F_{p^m}$ (or, $\F_p^n\to\F_p^m$)  is called a {\em vectorial $p$-ary  function}, or {\em $(n,m)$-function}. When $m=n$, $F$ can be uniquely represented as a univariate polynomial over $\F_{p^n}$ %(using an identification, via a basis, of the finite field with the vector space)
 of the form
$F(x)=\sum_{i=0}^{p^n-1} a_i x^i,\ a_i\in\F_{p^n}$,
whose {\em algebraic degree}   is then the largest Hamming weight of the exponents $i$ with $a_i\neq 0$. 
%For $f:\F_{p^n}\to \F_p$ we define the {\it Walsh-Hadamard transform} to be the integer-valued function
%$\displaystyle
%\bwht{f}{u}  = \sum_{x\in \F_{p^n}}\zeta_p^{f(x)-\Trn(u x)}, \ u \in \mathbb{F}_{p^n},$
% where $\zeta_p= e^{\frac{2\pi i}{p}}$ and 
We let $\Trn:\F_{p^n}\to \F_p$ be the absolute trace function, given by $\displaystyle \Trn(x)=\sum_{i=0}^{n-1} x^{p^i}$. Also, $\Trd(x)=\sum_{i=0}^{\frac{n}{d}-1} x^{p^{di}}$ is the relative trace from $\F_{p^n}\to\F_{p^d}$, where $d\,|\,n$.
%The Walsh transform $\vwht{F}{a}{b}$ of an $(n,m)$-function $F$ at $a\in \F_{p^n}, b\in \F_{p^m}$ is the Walsh-Hadamard transform of its component function ${\rm Tr}_m(bF(x))$ at $a$, that is,
%\[
%  \vwht{F}{a}{b}=\sum_{x\in\F_{p^n}} \zeta_p^{\Trm(bF(x))-\Trn(ax)}.
%\]
%(If one wishes to work with vector spaces, then one can replace the $\Tr$ by any scalar product on that environment.)
 
%Given a $p$-ary  function $f$, the derivative of $f$ with respect to~$a \in \F_{p^n}$ is the $p$-ary  function
%$
% D_{a}f(x) =  f(x + a)- f(x), \mbox{ for  all }  x \in \F_{p^n},
%$
%which can be naturally extended to vectorial $p$-ary functions.
%
%For an $(n,n)$-function $F$, and $a,b\in\F_{p^n}$, we let $\Delta_F(a,b)=\cardinality{\{x\in\F_{p^n} : F(x+a)-F(x)=b\}}$. We call the quantity
%$\delta_F=\max\{\Delta_F(a,b)\,:\, a,b\in \F_{p^n}, a\neq 0 \}$ the {\em differential uniformity} of $F$. If $\delta_F= \delta$, then we say that $F$ is differentially $\delta$-uniform. If $\delta=1$, then $F$ is called a {\em perfect nonlinear} ({\em PN}) function, or {\em planar} function. If $\delta=2$, then $F$ is called an {\em almost perfect nonlinear} ({\em APN}) function. It is well known that PN functions do not exist if $p=2$.

%We extended this notion recently in~\cite{S20} to $c$-boomerang connectivity table, based upon our prior~\cite{EFRST20} $c$-differential concept, which concept seems to have raised quite a bit of interest recently~\cite{RS20,YZ20}.
 
For a $p$-ary $(n,m)$-function   $F:\F_{p^n}\to \F_{p^m}$, and $c\in\F_{p^m}$, the ({\em multiplicative}) {\em $c$-derivative} of $F$ with respect to~$a \in \F_{p^n}$ is the  function
\[
 _cD_{a}F(x) =  F(x + a)- cF(x), \mbox{ for  all }  x \in \F_{p^n}.
\]

For an $(n,n)$-function $F$, and $a,b\in\F_{p^n}$, we let the entries of the $c$-Difference Distribution Table ($c$-DDT) be defined by ${_c\Delta}_F(a,b)=\cardinality{\{x\in\F_{p^n} : F(x+a)-cF(x)=b\}}$. We call the quantity
\[
\delta_{F,c}=\max\left\{{_c\Delta}_F(a,b)\,:\, a,b\in \F_{p^n}, \text{ and } a\neq 0 \text{ if $c=1$} \right\}\]
the {\em $c$-differential uniformity} of~$F$. If $\delta_{F,c}=\delta$, then we say that $F$ is differentially $(c,\delta)$-uniform (or that $F$ has $c$-uniformity $\delta$). If $\delta=1$, then $F$ is called a {\em perfect $c$-nonlinear} ({\em PcN}) function (certainly, for $c=1$, they only exist for odd characteristic $p$; however, as proven in~\cite{EFRST20}, there exist PcN functions for $p=2$, for all  $c\neq1$). If $\delta=2$, then $F$ is called an {\em almost perfect $c$-nonlinear} ({\em APcN}) function. 
When we need to specify the constant $c$ for which the function is PcN or APcN, then we may use the notation $c$-PN, or $c$-APN.
It is easy to see that if $F$ is an $(n,n)$-function, that is, $F:\F_{p^n}\to\F_{p^n}$, then $F$ is PcN if and only if $_cD_a F$ is a permutation polynomial. For $c=1$, we recover the classical derivative, PN, APN, etc., differential uniformity and DDT.

The rest of the paper is organized as follows.  Section~\ref{sec2_0} gives bounds for the $c$-differential uniformity for the Gold function perturbed by a single monomial. Section~\ref{sec2} gives  a general theorem describing the entries of the $c$-DDT via characters in the finite field. Section~\ref{sec3}  investigates  $c$-DDT entries for a perturbation via an arbitrary linearized monomial of the Gold function, for $p$ odd. 
Section~\ref{sec4}  completes the investigation and does the same for $p$ even.
Section~\ref{sec5}   concludes the paper.

 \section{Perturbations of the Gold function via a linearized monomial}
 \label{sec2_0}
 We shall be using, throughout the paper, the following lemma. 
 \begin{lem}[\textup{\cite{Co98_1,EFRST20}}]
\label{lem:gcd}
Let $p,t,n$ be integers greater than or equal to $1$ \textup{(}we take $t\leq n$, though the result can be shown in general\textup{)}. Let $d=\gcd(n,t), e=\gcd(n,2t)$. Then,
\begin{align*}
&  \gcd(2^{t}+1,2^n-1)=\frac{2^e-1}{2^d-1},  \text{ and if  $p>2$, then}, \\
& \gcd(p^{t}+1,p^n-1)=2,   \text{ if $\frac{n}{d}$  is odd},\\
& \gcd(p^{t}+1,p^n-1)=p^d+1,\text{ if $\frac{n}{d}$ is even}.
\end{align*} 
\end{lem}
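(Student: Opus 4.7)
The plan is to reduce everything to the standard identity
\[
\gcd(p^a-1, p^b-1)=p^{\gcd(a,b)}-1,
\]
combined with the factorization $p^{2t}-1=(p^t-1)(p^t+1)$, which ties the three gcd quantities
\[
\gcd(p^{2t}-1,p^n-1)=p^e-1,\quad \gcd(p^t-1,p^n-1)=p^d-1,\quad \gcd(p^t+1,p^n-1)
\]
together. So the proof strategy is essentially to isolate the third one from the other two.

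For $p=2$, the factors $2^t-1$ and $2^t+1$ are coprime (one is odd, they differ by $2$), so any common divisor of $2^n-1$ with their product splits multiplicatively:
\[
\gcd(2^{2t}-1,2^n-1)=\gcd(2^t-1,2^n-1)\cdot\gcd(2^t+1,2^n-1).
\]
Substituting $2^e-1$ on the left and $2^d-1$ for the first factor on the right immediately yields the claimed $(2^e-1)/(2^d-1)$.

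For odd $p$, the splitting breaks down since now $\gcd(p^t-1,p^t+1)=2$, so I would branch on the parity of $n/d$. A short $2$-adic valuation computation first establishes that $e=\gcd(2t,n)$ equals $d$ when $n/d$ is odd and $2d$ when $n/d$ is even. In the odd-$n/d$ subcase, $d\mid t$ gives $p^t\equiv 1\pmod{p^d-1}$, so $\gcd(p^t+1,p^d-1)=\gcd(2,p^d-1)=2$; since the target gcd divides $\gcd(p^{2t}-1,p^n-1)=p^e-1=p^d-1$, it therefore equals exactly $2$. In the even-$n/d$ subcase, the condition $\gcd(n/d,t/d)=1$ forces $k:=t/d$ to be odd, so the factorization $p^t+1=(p^d+1)\bigl(p^{(k-1)d}-p^{(k-2)d}+\cdots+1\bigr)$ gives $p^d+1\mid p^t+1$, while $2d=e\mid n$ gives $p^d+1\mid p^{2d}-1\mid p^n-1$, producing one divisibility.

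The reverse divisibility in the even subcase is the delicate step. The target gcd divides $(p^d-1)(p^d+1)$; any odd prime appearing in it cannot divide $p^d-1$ (otherwise it would divide $\gcd(p^t+1,p^d-1)=2$), so the odd part lies in $p^d+1$. For the $2$-part I would invoke the Lifting-the-Exponent lemma: since $k$ is odd, $v_2(p^t+1)=v_2((p^d)^k+1)=v_2(p^d+1)$, which caps $v_2$ of the gcd at $v_2(p^d+1)$, matching the lower bound already obtained. This $2$-adic bookkeeping—keeping the factor of $2$ from being counted twice across $p^d-1$ and $p^d+1$—is the main obstacle; everything else is routine manipulation of the basic gcd identity.
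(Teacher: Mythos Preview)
The paper does not supply its own proof of this lemma; it is quoted from the cited references \cite{Co98_1,EFRST20} and used as a black box. So there is no in-paper argument to compare against.

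Your argument is correct and self-contained. The $p=2$ case is handled cleanly via coprimality of $2^t\pm 1$, and for odd $p$ your parity split on $n/d$ together with the computation $e=d$ or $e=2d$ is right. In the even-$n/d$ subcase the divisibility $p^d+1\mid\gcd$ is fine, and your upper bound works: the odd part is forced into $p^d+1$ since $\gcd(p^t+1,p^d-1)=2$, and the $2$-part is capped by $v_2(p^t+1)=v_2(p^d+1)$. One small remark: invoking LTE is slight overkill here, since for odd $a$ and odd $k$ the factorization $a^k+1=(a+1)\bigl(a^{k-1}-a^{k-2}+\cdots+1\bigr)$ already has an odd cofactor (it is a sum of $k$ odd terms with alternating signs, hence $\equiv k\equiv 1\pmod 2$), giving $v_2(a^k+1)=v_2(a+1)$ directly. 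Otherwise the write-up is complete.
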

We showed in~\cite{EFRST20} that the inverse function is PcN for $c=0$,  and it is 2 or~3 depending upon the parameter $c$ (we found precisely those conditions).  In~\cite{SG20} we showed that adding   $x^{2^d}$ to $x^{2^n-2}$, where $d$ is the largest nontrivial divisor of $n$, increases the mentioned $c$-differential uniformity from~$2$ or $3$ (for $c\neq 0,1$) to $\geq 2^{d}+2$ (in the case of the inverse function as used in the Advanced Encryption Function (AES) it is 18). This discrepancy is rather surprising and prompts an   investigation into other well-behaved, under classical differential uniformity, vectorial functions.
 
 In the  result of this section we see that simply adding a linearized monomial to   the Gold function increases significantly the maximum value in its $c$-differential spectrum size. In the following, we take $p$ prime, $n\geq 4$ an integer, and $0\leq t<n$ an integer such that   $a^{p^k-p^t+1}+1$ has a root (and consequently, $\gcd(p^k-p^t+1,p^n-1)$ roots) in the  field $\F_{p^n}$.

 \begin{thm}
 \label{thm:main1}
 Let $p$ be a prime number, $n\geq 4$, $F(x)=x^{p^k+1}$ be the Gold function on $\F_{p^n}$, and $1\neq c\in\F_{p^n}$, $1\leq k<n$ with $\gcd(k,n)=d\geq 1$ and $\frac{n}{\gcd(n,k)}\geq 3$. Then, the $c$-differential uniformity, $\delta_{G,c}$, of $G(x)=F(x)+x^{p^t}$ satisfies
 $\gcd(p^k-p^t+1,p^n-1)+1\leq \delta_{G,c}\leq \max\{p^k+1,p^t\}$;
if  $G(x)=F(x)+x$, or $G(x)=F(x)+x^{p^k}$, then $p^{\gcd(n,k)}+1\leq \delta_{G,c}\leq p^k+1$.
   \end{thm}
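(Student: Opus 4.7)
My plan is to analyze the polynomial equation counting $c$-differentials directly. Expanding and using that the Frobenius is $\F_p$-linear, the equation $G(x+a)-cG(x)=b$ simplifies to
\begin{equation*}
(1-c)x^{p^k+1}+ax^{p^k}+a^{p^k}x+(1-c)x^{p^t}\;=\;b-a^{p^k+1}-a^{p^t}.
\end{equation*}
Since $c\neq 1$, the leading coefficient is nonzero, so this is a polynomial of degree $\max\{p^k+1,p^t\}$ in $x$ and therefore has at most that many roots. This immediately yields the upper bound $\max\{p^k+1,p^t\}$ in the first claim, and the upper bound $p^k+1$ in the second (since $t\in\{0,k\}$ forces $p^t\le p^k<p^k+1$).

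For the lower bound in the generic range of $t$, I take $a=0$ (allowed since $c\neq 1$) and $b=0$. The equation collapses to $(1-c)G(x)=0$, i.e., $G(x)=x^{p^k+1}+x^{p^t}=0$. The root $x=0$ is free, and any nonzero root satisfies $x^{p^k-p^t+1}=-1$, which by the standing hypothesis of the section has exactly $\gcd(p^k-p^t+1,p^n-1)$ solutions in $\F_{p^n}^*$. Adding the two counts gives $\gcd(p^k-p^t+1,p^n-1)+1$ solutions and hence $\delta_{G,c}\ge \gcd(p^k-p^t+1,p^n-1)+1$.

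For the special cases $t=0$ or $t=k$, the above $\gcd$ degenerates to $1$, so a different construction is needed. Treating $t=0$ (the case $t=k$ is symmetric, factoring out $x^{p^k}$ instead of the linear term), I substitute $x=ay$ with nonzero $a$ to be chosen; after dividing by $a^{p^k+1}$ and picking $b$ so that the right-hand side vanishes, the equation becomes
\begin{equation*}
(1-c)y^{p^k+1}+y^{p^k}+\bigl(1+\tfrac{1-c}{a^{p^k}}\bigr)y\;=\;0.
\end{equation*}
Factoring out $y$ and substituting $z=1/y$ in the remaining bracket (after multiplying by $z^{p^k}$, and assuming $a^{p^k}+1-c\neq 0$) converts this to the $\F_p$-linearized equation $z^{p^k}+\lambda z=(c-1)\lambda$ with $\lambda:=a^{p^k}/(a^{p^k}+1-c)$. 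Its kernel is $\{0\}\cup\{z:z^{p^k-1}=-\lambda\}$, which by Lemma~\ref{lem:gcd} has exactly $p^d$ elements whenever $-\lambda$ is a $(p^k-1)$-th power in $\F_{p^n}^*$. If $(c-1)\lambda$ also lies in the image of this linearized map, the equation has a full coset of $p^d$ solutions $z$, giving $p^d$ nonzero $y$'s; adjoining $y=0$ yields $p^d+1$ solutions, so $\delta_{G,c}\ge p^d+1$.

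The main obstacle is selecting $a$ so that both conditions above hold simultaneously. The power condition on $-\lambda$ is automatic for $p=2$ and for $p$ odd with $n/d$ even (by Lemma~\ref{lem:gcd}, $-1$ is itself a $(p^k-1)$-th power in those cases), where I just pick $a$ with $\lambda$ a $(p^k-1)$-th power. When $p$ is odd and $n/d$ is odd, $-1$ is \emph{not} a $(p^k-1)$-th power, so I must choose $a$ with $\lambda$ itself not a power; such $a$ exist because $\lambda$ ranges over $\F_{p^n}\setminus\{0,1\}$ as $a$ varies, a set strictly larger than the index-$(p^d-1)$ subgroup of $(p^k-1)$-th powers under the hypothesis $n/d\ge 3$. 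The image condition $(c-1)\lambda\in\mathrm{image}\,L$ is then secured by exploiting the remaining freedom in $\lambda$, either by a direct construction or by a counting argument over the sizeable pool of admissible values.
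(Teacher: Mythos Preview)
Your treatment of the upper bounds and of the lower bound for general $t$ (via $a=b=0$) coincides with the paper's. Where you diverge is in the special cases $t\in\{0,k\}$: the paper takes $a=0$, $b\neq 0$, substitutes $x=\tfrac{b}{1-c}\,y$ to obtain the Bluher normal form $y^{p^k+1}-By+B=0$ with $B=\bigl(\tfrac{1-c}{b}\bigr)^{p^k}$, and then cites Bluher's theorem on the number of $B$'s for which this trinomial has $p^d+1$ roots. Since for fixed $c\neq 1$ the parameter $B$ sweeps all of $\F_{p^n}^*$ as $b$ varies, the lower bound $p^d+1$ follows for \emph{every} $c\neq 1$ in one line. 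Your reduction via $z=1/y$ to the affine equation $z^{p^k}+\lambda z=(c-1)\lambda$ is a genuinely different and more elementary route, avoiding Bluher altogether.

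The gap is in your last paragraph. First, a minor slip: when $p$ is odd and $n/d$ is odd, you want $-\lambda$ to be a $(p^k-1)$-th power, which means $\lambda$ must lie in the specific coset $-H$ of the power subgroup $H$ (of index $p^d-1$), not merely outside $H$. Second, and this is the real issue, you never verify the image condition: you must show that for the \emph{given} $c$ there exists an admissible $\lambda$ with $(c-1)\lambda\in\mathrm{Im}\bigl(z\mapsto z^{p^k}+\lambda z\bigr)$, and since the image depends on $\lambda$ this is not a linear constraint amenable to an unspecified ``counting argument''. The cleanest completion is to write $-\lambda=\mu^{p^k-1}$ and substitute $z=\mu w$; the equation becomes $w^{p^k}-w=(1-c)/\mu$, which by additive Hilbert~90 is solvable iff $\Trd\bigl((1-c)/\mu\bigr)=0$. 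There are $p^{n-d}-1$ nonzero $\mu$ with this trace condition, while the forbidden set $\{\mu:\mu^{p^k-1}=-1\}$ (corresponding to $\lambda=1$, for which no $a$ exists) has at most $p^d-1$ elements; since $n/d\ge 3$ gives $p^{n-d}-1>p^d-1$, a suitable $\mu$ exists for every $c\neq 1$, and the argument closes. Without this step (or its equivalent) the lower bound $p^{\gcd(n,k)}+1$ is not established.
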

 \begin{proof}
 Let $G(x)=x^{p^k+1}+x^{p^t}$. The $c$-differential uniformity equation for $G$ for $c\in\F_{p^n}$ at $(a,b)\in\F_{p^n}\times\F_{p^n}$ is
 \begin{equation}
 \label{eq:cdiff}
 (x+a)^{p^k+1}+(x+a)^{p^t}-c x^{p^k+1}-cx^{p^t}=b.
 \end{equation}
 If $a=0$, the equation becomes
\[
 x^{p^k+1}+  x^{p^t}-\frac{b}{1-c}=0.
\]
Surely, if $b=0$, then $x=0$ and $x^{p^k-p^t+1}+1=0$. The latter equation (under our assumption) has $\gcd(p^k-p^t+1,p^n-1)$ solutions. 
However, if $b\neq 0$, the equation is not easy to handle, unless $t$ has some special forms, which are dealt with below.

 We continue with $G(x)=x^{p^k+1}+x$ and
   use the $c$-differential uniformity equation of $G$ for $c\in\F_{p^n}$ at $(a,b)\in\F_{p^n}\times\F_{p^n}$ from~\eqref{eq:cdiff}, which, when $t=0$ becomes 
 \begin{equation}
 \label{eq:cdiff0}
 %(x+a)^{p^k+1}+(x+a)-c x^{p^k+1}-cx =b.
 (1-c) x^{p^k+1} +a x^{p^k}+(1+a^{p^k} -c) x+a^{p^k+1}+a-b=0.
 \end{equation}
 Clearly, $\delta_{G,c}\leq p^k+1$. We now find a lower bound. 
If $a=0$, the equation becomes
\[
 x^{p^k+1}+  x-\frac{b}{1-c}=0.
\]
If $b=0$, then $x=0$ and $x^{p^k}+1=0$. The latter equation has a unique solution, since $\gcd(p^k,p^n-1)=1$. Thus, if $a=0$, we have two solutions for~\eqref{eq:cdiff0}. If $b\neq 0$, we use the transformation $x=\frac{b}{1-c} y$ and obtain
\begin{equation}
\label{blu:eq}
y^{p^k+1}-B y+B=0,
\end{equation}
where $B=\left(\frac{1-c}{b}\right)^{p^k}$. We now use~\cite[Theorem 5.6]{Bluher04}. We let $Q=p^{\gcd(n,k)}$, so $\F_Q=\F_{p^n}\cap \F_{p^k}$, $m=[\F_{p^n}:\F_{Q}]=\frac{n}{\gcd(n,k)}$. By~\cite[Theorem 5.6]{Bluher04}, we know that there are $\displaystyle \frac{Q^{m-1}-Q}{Q^2-1}$, $\displaystyle \frac{Q^{m-1}-1}{Q^2-1}$, for $m$ even, respectively odd, values of $B$ such that Equation~\eqref{blu:eq} has $Q+1$ solutions.
Let $T$ be the set of all such $B$.  
For any $B\in T$, we let $b\neq 0$ be random and $c= 1-b(B)^{p^{-k}}$. For such choices of parameters, we do get $p^{\gcd(n,k)}+1$ solutions, and so, $\delta_{G,c}\geq p^{\gcd(n,k)}+1$.
  
 We next assume that $a\neq 0$ (while we do not need to consider this case to show our claim, we do treat it here, just to point out that the $c$-DDT may have other entries, not only on the first row, with large values).  
 We now remove the coefficient of $x^{p^k}$ with the transformation 
 \[
  x\mapsto \frac{c a^{p^k+1}-b c+b}{(1-c)
   \left(a^{p^k}-\left(\frac{a}{1-c}\right)^{p^k}+c
   \left(\frac{a}{1-c}\right)^{p^k}-c+1\right)} x-\frac{a}{1-c}
   \]
    and     Equation~\eqref{eq:cdiff} becomes~\eqref{blu:eq}, where now, 
    \begin{equation}
    \label{eq:B}
    B=\frac{(a_1-e^{p^k})^{p^k+1}}{(b_1-ea_1)^{p^k}},\ e=\frac{a}{1-c},a_1=1+\frac{a^{p^k}}{1-c}, b_1=\frac{a+a^{p^k+1}-b}{1-c}.
    \end{equation}
 By~\cite[Theorem 5.6]{Bluher04}, again we have $p^{\gcd(n,k)}+1$ solutions for~\eqref{blu:eq}, for $B$ belonging to a  set $T$ of cardinality $\displaystyle |T|=\frac{Q^{m-1}-Q}{Q^2-1}$, $\displaystyle |T|= \frac{Q^{m-1}-1}{Q^2-1}$, for $m$ even, respectively odd.  Clearly, for $a,b,c$ such that~\eqref{eq:B} holds (there is no need to check if their  existence, since we know they do, from the first part of the proof), then again we have ${_c}\Delta_G(a,b)\geq p^{\gcd(n,k)}+1$, and thus  $\delta_{G,c}\geq p^{\gcd(n,k)}+1$ for those values of $c$.
    
    We next continue with $G(x)=x^{p^k+1}+x^{p^k}$. Equation~\eqref{eq:cdiff} is now
    \begin{equation}
    \label{eq:2ndclaim}
   x^{p^k+1}+\left(1+\frac{a}{1-c}\right)x^{p^k}+ \frac{a^{p^k}}{1-c} x+\frac{a^{p^k+1}+a^{p^k}-b}{1-c}=0.
    \end{equation}
    If $c=a+1$, this equation is now 
    \[
    x^{p^k+1}-a^{p^k-1} x+\left(\frac{b}{a}-a^{p^k}-a^{p^k-1}\right)=0,
    \]
    which is equivalent to $x^{p^k+1}-B x+B=0$, where $\displaystyle B=\frac{a^{p^{2k}-1}}{\left(\frac{b}{a}-a^{p^k}-a^{p^k-1}\right)^{p^k}}$, and this equation can be treated via~\cite[Theorem 5.6]{Bluher04}, as well (observe that, regardless of what $B\neq 0$ is, we can always find $a,b$ such that the previous identity holds: for example, we can take $b=a^{p^k}$, and so, $B=\frac{1}{(-1)^{p^k} a}$).
    If $c\neq a+1$, as we did for the first claim, we remove the coefficient of $x^{p^k}$ by using a transformation 
    \[
  x\mapsto \frac{c a^{p^k+1}-b c+b}{(1-c)
   \left(a^{p^k}-\left(\frac{a}{1-c}+1\right)^{
   p^k}+c
   \left(\frac{a}{1-c}+1\right)^{p^k}\right)} x-\frac{a}{1-c}-1
   \]
    and
    Equation~\eqref{eq:2ndclaim} becomes~$x^{p^k+1}-Bx+B=0$,
    where
     \begin{equation}
    \label{eq:B1}
    B=\frac{(a_1-e^{p^k})^{p^k+1}}{(b_1-ea_1)^{p^k}},\ e=1+\frac{a}{1-c},a_1=\frac{a^{p^k}}{1-c}, b_1=\frac{a^{p^k+1}+a^{p^k}-b}{1-c},
    \end{equation}
    enabling us to use, yet again,~\cite[Theorem 5.6]{Bluher04}, to infer ${_c}\Delta_{G}(a,b)\geq p^{\gcd(n,k)}+1$, and consequently, $\delta_{G,c}\geq p^{\gcd(n,k)}+1$.
The theorem is shown.
\end{proof}
               
\section{Characters and $c$-differential uniformity}
\label{sec2}

We showed in~\cite{S20_Weil} a general theorem expressing the entries in the $c$-Boomerang Connectivity Table  (for all $c\neq 0$) in terms of double Weil sums.  
There is no reason why that is not developed for the $c$-DDT, and we shall do that below. We first show a general theorem that gives {\em all} entries of the $c$-DDT for any function in terms of characters of the corresponding finite field (we will also include $c=1$ in our analysis, since the use of characters does not seem to be a method of choice for classical computation of the DDT). For  the convenience of the reader, we will go through the proof, although it follows in general lines the characters computation for the entries of the boomerang connectivity table method of~\cite{S20_Weil}.

Let $G$ be the Gauss' sum $\displaystyle G(\psi,\chi)=\sum_{z\in\F_q^*} \psi(z)\chi(z)$, where $\chi,\psi$, are additive, respectively, multiplicative characters of $\F_q$, $q=p^n$. Below, we let $\chi_1(a)=\exp\left(\frac{2\pi i\Trn(a)}{p}\right)$ be the principal additive character, and $\psi_k\left(g^\ell\right)=\exp\left(\frac{2\pi ik\ell}{q-1}\right)$ be the $k$-th multiplicative character of $\F_q$, $0\leq k\leq q-2$. %We let $\psi_1$ be the generator of the cyclic group of multiplicative characters.

\begin{thm}
\label{thm:cDU_Char}
Let $F(x)$ be an arbitrary function on $\F_q$, $q=p^n$, $p$ a prime number, and $c\in\F_q^*$. Then, the $c$-Differential Distribution Table entry   at $(a,b)$  is given by 
\[
{_c}\Delta_F(a,b)=1+\frac1{q} \sum_{\alpha\in\F_q^*} \chi_1(-b \alpha)\sum_{x\in \F_q}
\chi_1\left(\alpha \left( F(x+a) -c F(x) \right)\right).
\]
\end{thm}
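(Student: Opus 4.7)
The plan is to apply the standard additive-character orthogonality trick to the indicator function of the defining equation ${F(x+a)-cF(x)=b}$, and then isolate the trivial $\alpha=0$ term, which will account for the constant $1$ appearing in the stated formula.

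First I would recall the orthogonality relation for the canonical additive character of $\F_q$: for any $y\in\F_q$,
\[
\sum_{\alpha\in\F_q}\chi_1(\alpha y)=
\begin{cases} q & \text{if } y=0,\\ 0 & \text{if } y\neq 0,\end{cases}
\]
which follows because $\chi_1$ is a nontrivial character when restricted to any nonzero coset (and the trace is $\F_p$-linear and surjective). Equivalently, the indicator of the event $y=0$ equals $\frac{1}{q}\sum_{\alpha\in\F_q}\chi_1(\alpha y)$.

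Next I would rewrite the $c$-DDT entry as a sum of this indicator with $y=F(x+a)-cF(x)-b$:
\[
{_c}\Delta_F(a,b)=\sum_{x\in\F_q}\mathbf{1}\bigl[F(x+a)-cF(x)-b=0\bigr]
=\frac{1}{q}\sum_{x\in\F_q}\sum_{\alpha\in\F_q}\chi_1\!\bigl(\alpha(F(x+a)-cF(x)-b)\bigr).
\]
Splitting off the $\alpha=0$ term gives $\frac{1}{q}\sum_{x\in\F_q}1=1$, and using the multiplicativity $\chi_1(\alpha(F(x+a)-cF(x)-b))=\chi_1(-b\alpha)\,\chi_1(\alpha(F(x+a)-cF(x)))$ for $\alpha\neq 0$, I would obtain exactly the claimed identity.

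There is no real obstacle here: the result is essentially a one-line consequence of character orthogonality, and the only thing to be careful about is (i) the split into $\alpha=0$ and $\alpha\in\F_q^*$ so that the constant $1$ appears cleanly outside the Weil-sum expression, and (ii) noting that no assumption on $c$ (apart from $c\neq 0$, which is only needed later to match the paper's convention of considering multiplicative $c$-derivatives) actually enters the argument, so the formula holds uniformly across $c\in\F_q^*$ and in particular includes $c=1$, recovering a character expression for the classical DDT.
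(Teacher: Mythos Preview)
Your proof is correct and follows essentially the same route as the paper: both use additive-character orthogonality to rewrite ${_c}\Delta_F(a,b)$ as $\frac{1}{q}\sum_{x}\sum_{\alpha}\chi_1(\alpha(F(x+a)-cF(x)-b))$, then split off the $\alpha=0$ contribution to produce the leading $1$ and factor $\chi_1(-b\alpha)$ out of the remaining sum over $\alpha\in\F_q^*$.
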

\begin{proof}
Recall that ${_c}\Delta_F(a,b)$ is the number of solutions in $\mathbb{F}_{q}$, $q=p^n$, for the equation
\begin{equation}
\label{eq:eq3.1}
F(x+a)-c F(x)=b.
\end{equation}
As done in~\cite{S20_Weil},  
we know  that the number  $\cN(b)$  of of solutions $(x_1,\ldots,x_n)\in\F_q^n$, for $b\in\F_{p^m}$ fixed, of an equation $f(x_1,\ldots,x_n)=b$ is
\begin{align*}
\cN(b)
&= \frac{1}{q}\sum_{x_1,\ldots,x_n\in \F_q}\sum_{\alpha\in\F_q} \chi_1\left(\alpha \left( f(x_1,\ldots,x_n)- b\right)\right)\\
&=\frac{1}{q}\sum_{x_1,\ldots,x_n\in \F_q}\sum_{\chi\in \widehat{\F_q}}\chi(f(x_1,\ldots,x_n))\overline{\chi(b)},
\end{align*}
where $\widehat{\F_q}$ is the set of all additive characters of $\F_q$, and $\chi_1$ is the principal additive character of  $\F_q$. For our Equation~\eqref{eq:eq3.1}, we see that the number of solutions for some $a,b$ fixed is therefore
\allowdisplaybreaks
\begin{align*}
&\cN_{a,b;c}
=\frac{1}{q}\sum_{x\in \F_q}\sum_{\alpha\in\F_q} \chi_1\left(\alpha\left(F(x+a)-cF(x)-b \right) \right)  \\
&= \frac{1}{q}\sum_{\alpha\in\F_q} \chi_1(-b\alpha)
\sum_{x\in \F_q} \chi_1\left(\alpha  F(x+a)\right) \chi_1\left(-\alpha c F(x)\right).
\end{align*}
Splitting, based on $\alpha=0$ and $\alpha\neq 0$, we write
\allowdisplaybreaks
\begin{align*}
{_c}\Delta_F(a,b)&=1+\frac1{q} \sum_{\alpha\in\F_q^*} \chi_1(-b \alpha)\sum_{x\in \F_q}
\chi_1\left(\alpha  F(x+a) -\alpha c F(x) \right)\\
&= 1+\frac1{q} \sum_{\alpha\in\F_q^*} \chi_1(-b \alpha) U_\alpha,
\end{align*}
where $U_{\alpha} :=\sum_{x\in \F_q} \chi_1\left(\alpha  F(x+a)-\alpha c F(x)\right)$.
 \end{proof}
 \begin{cor}
 \label{cor:cDU_Char}
  For all $c\in\F_q$, if $a=0$, then  
\[
 U_\alpha=\sum_{x\in \F_q} \chi_1\left(\alpha  (1-c) F(x)\right),
\]
and so,
 \[
{_c}\Delta_F(0,b)=1+\frac1{q}\sum_{\alpha\in\F_q^*} \chi_1(-b \alpha)  \sum_{x\in \F_q} \chi_1\left(\alpha  (1-c) F(x)\right).
 \]
 \end{cor}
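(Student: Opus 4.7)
The corollary is an immediate specialization of Theorem~\ref{thm:cDU_Char}, so my plan is to obtain it by direct substitution rather than by any separate argument. Starting from the expression
\[
U_\alpha := \sum_{x\in\F_q} \chi_1\bigl(\alpha F(x+a) - \alpha c F(x)\bigr)
\]
introduced inside the proof of Theorem~\ref{thm:cDU_Char}, I would set $a=0$, observe that $F(x+0)=F(x)$, and factor $F(x)$ out of the character argument, giving $U_\alpha=\sum_{x\in\F_q}\chi_1\bigl(\alpha(1-c)F(x)\bigr)$. Plugging this simplified $U_\alpha$ into the displayed formula
\[
{_c}\Delta_F(a,b)=1+\frac1{q}\sum_{\alpha\in\F_q^*}\chi_1(-b\alpha)\,U_\alpha
\]
from the theorem yields the second claim of the corollary.

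The only technical nuance is whether the statement is meant to apply literally for every $c\in\F_q$, including $c=1$; the formula still makes formal sense there. As a sanity check I would verify that case separately: when $c=1$, the inner character becomes $\chi_1(0)=1$, so $U_\alpha=q$ for every $\alpha\neq0$, and the outer sum reduces via character orthogonality to $q$ if $b=0$ and $0$ otherwise, matching the obvious fact that $F(x+0)-F(x)=0$ holds for all $x$ precisely when $b=0$. No obstacles are expected; the result is truly a one-line consequence of the theorem and the substitution $a=0$.
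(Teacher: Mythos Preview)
Your proposal is correct and matches the paper's treatment: the corollary is stated there with no separate proof, as it follows immediately from Theorem~\ref{thm:cDU_Char} by setting $a=0$ in the definition of $U_\alpha$. Your added sanity check for $c=1$ is a nice touch but not required.
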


 \section{Entries of the $c$-DDT for the perturbed Gold function via a linearized polynomial, $p$ odd}
 \label{sec3}
 
 We now take the particular case of the Gold function $F(x)=x^{p^k+1}$ on $\F_q$, $1\leq k<n$, $q=p^n$, $p$ prime, $n\geq 2$,  perturbed by any linearized polynomial $P(x)=\sum_{i=0}^{n-1} a_i x^{p^i}$, that is, $G(x)=x^{p^k+1}+\sum_{i=0}^{n-1} a_i x^{p^i}$. We fix $c\in\F_q$ (for $c=1$ many of the expressions will simplify significantly, since the term $(1-c)P(x)$ below will disappear) but we kept that case for completeness, since we do not believe there was ever a complete description for the DDT of the Gold function (surely, in this case, in terms of characters). For every $\alpha\in\F_q^*$, we let $A_\alpha=\alpha(1-c)$, 
 $P^*(x)=\sum_{i=0}^{n-1} ((1-c)a_i)^{p^{n-i}} x^{p^{n-i}}$ be the linearized $c$-companion   polynomial for~$P$, 
 and $B_\alpha=\sum_{i=0}^{n-1} (a_i')^{p^{n-i}}$, 
 where $a_i'=\alpha (1-c)a_i=A_\alpha a_i$, for all $0\neq i\neq k$, $a_0'=\alpha\left(a^{p^k}+(1-c)a_0\right)=A_\alpha a_0+\alpha a^{p^k}$ and $a_k'=\alpha (a+(1-c)a_k)=A_\alpha a_k+\alpha a$.

 We next expand 
%  \pante{I was planning on just relabeling $P(x)$ to merge all those terms, since I didn't want to carry them along, but if you insist...}  
 \begin{align*}
 G(x+a)-c G(x)&=(1-c) x^{p^k+1} +a^{p^k}x+a x^{p^k}+(1-c)P(x)+a^{p^k+1} +P(a).
%&=(1-c) x^{p^k+1}+Q(x)+a^{p^k+1} +P'(a),
 \end{align*}
% where $Q(x)=\sum_{i=0}^{n-1} b_i x^{p^i}$, $b_i=a_i$, $1\neq i\neq k$, $b_1=a_1+a^{p^k}$, $b_k=a+(1-c)a_k$.
  Thus, using Theorem~\ref{thm:cDU_Char} and the fact that $\chi_1(y^p)=\chi_1(y)$, for all $y\in\F_q$, implying $\chi_1(a_i' x^{p^i})=\chi_1((a_i')^{p^{n-i}} x)$, we get
 \allowdisplaybreaks
\begin{align*}
{_c}\Delta_G(a,b)&= 1+\frac1{q} \sum_{\alpha\in\F_q^*} \chi_1(-b \alpha) U_\alpha,
\end{align*}
where (we use the notation $P'(a)=P(a)+a^{p^k+1}$)
 \allowdisplaybreaks
\begin{align*}
U_{\alpha} &=\sum_{x\in \F_q} \chi_1\left(\alpha  G(x+a)-\alpha c G(x)\right)\\
&=\sum_{x\in \F_q} \chi_1\left(\alpha\left((1-c) x^{p^k+1} +a^{p^k}x+a x^{p^k}+(1-c)P(x)+ P'(a)\right) \right)\\
&=\chi_1(\alpha  P'(a))  \sum_{x\in \F_q} \chi_1\left(\alpha(1-c) x^{p^k+1}\right)\chi_1\left(\alpha \left(ax^{p^k}+a^{p^k}x+ (1-c)P(x)\right) \right)\\
&=\chi_1(\alpha  P'(a))    \sum_{x\in \F_q} \chi_1\left(A_\alpha x^{p^k+1}+ B_\alpha  x\right).
\end{align*}

Therefore, 
\begin{align*}
{_c}\Delta_G(a,b)&= 1+\frac1{q}   \sum_{\alpha\in\F_q^*} \chi_1\left(\alpha(P'(a)-b)\right)\sum_{x\in \F_q} \chi_1\left(A_\alpha x^{p^k+1}+ B_\alpha  x \right).
\end{align*}

 For general $c$ and $a$ fixed, we now let $X_a\subseteq\F_q^*$ be defined by 
 (the second formulation is obtained by  raising the first one to the $p^k$ power)
 \begin{equation}
 \begin{split}
 \label{eq:X_asol}
 X_a&=\left\{\alpha\in\F_q^*\,:\,  \alpha a^{p^k}+\alpha^{p^{-k}} a^{p^{-k}}  +P^*(\alpha)=0 \right\}\\
 &=\left\{\alpha\in\F_q^*\,:\,  \alpha^{p^k} a^{p^{2k}} +\alpha a+(P^*(\alpha))^{p^k}=0 \right\}.
 \end{split}
 \end{equation}
 (This is the set of $\alpha$'s such that $B_\alpha=0$.)

Next, we let
 \allowdisplaybreaks
\begin{align*}
S_\alpha&= \sum_{x\in \F_q}\chi_1\left(A_\alpha x^{p^k+1}+ B_\alpha  x \right)\\
T_{a,b}&= \sum_{\alpha\in\F_q^*}  \chi_1(\alpha(P'(a)-b))S_\alpha.
\end{align*}

With these notations, we thus obtain
 \allowdisplaybreaks
 \begin{align*}
 %{_c}\Delta_G(a,b)&=1+\frac1{q}  
 T_{a,b}&= \sum_{\alpha\in X_a} \chi_1(\alpha (P'(a)-b))   \sum_{x\in \F_q} \chi_1\left(A_\alpha  x^{p^k+1}\right)\\
 &+ \sum_{\alpha\in \bar X_a} \chi_1(\alpha (P'(a)-b))   \sum_{x\in \F_q} \chi_1\left(A_\alpha  x^{p^k+1}+B_\alpha x\right)\\
 &=:T_1+T_2.
 \end{align*}
  
  We let $\eta=\psi_{(q-1)/2}$ be the quadratic character of $\F_q$ and for some $A,B \in \F_q$, $1\leq k<n$, $d=\gcd(n,k)$, we let $\mathscr{S}_k(A,B)=\sum_{x\in \F_q} \chi_1\left(A x^{p^k+1}+ B x\right)$.
 We  now use~\cite[Theorem 1 and 2]{Co98_1} (we simplify the original statement).  
  \begin{thm}[\textup{\cite{Co98_1}}]
  \label{thm:Co98_1}
  Let $q=p^n$, $1\leq k<n, d=\gcd(n,k)$. The following statements hold\textup{:}
\begin{enumerate}
\item[$(1)$] When  $\frac{n}{d}$ is even $(n=2m)$, then
 \[
\mathscr{S}_k(A,0)=
\begin{cases}
 (-1)^{\frac{m}{d}}\, p^m\, &\text{ if } A^{\frac{q-1}{p^d+1}}\neq (-1)^{\frac{m}{d}}\\ 
 (-1)^{\frac{m}{d}+1}\, p^{m+d}\, &\text{ if } A^{\frac{q-1}{p^d+1}}= (-1)^{\frac{m}{d}}.
\end{cases}
\]
\item[$(2)$] When $\frac{n}{d}$ is odd, then 
 \[
\mathscr{S}_k(A,0)=
\begin{cases}
(-1)^{n-1}\sqrt{q}\, \eta(A) &\text{ if } p\equiv 1\pmod 4\\ 
 (-1)^{n-1}\imath^n \sqrt{q}\, \eta(A)  &\text{ if } p\equiv 3\pmod 4.
\end{cases}
\]
\end{enumerate}
\end{thm}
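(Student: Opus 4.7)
The plan is to interpret $\mathscr{S}_k(A,0)$ as the quadratic-form character sum attached to $Q(x)=\Trn(Ax^{p^k+1})$ on $\F_q$ viewed as an $n$-dimensional $\F_p$-space, and then invoke classical Gauss sum evaluations combined with the arithmetic of Lemma~\ref{lem:gcd}. Polarizing (legitimate for $p$ odd) gives the symmetric $\F_p$-bilinear form
\[
B(x,y)=Q(x+y)-Q(x)-Q(y)=\Trn\bigl(A(x^{p^k}y+xy^{p^k})\bigr)=\Trn\bigl(y\,L_A(x)\bigr),
\]
where $L_A(x)=Ax^{p^k}+A^{p^{-k}}x^{p^{-k}}$ by Frobenius-invariance of $\Trn$. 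The radical of $B$ is $\ker L_A$; raising $L_A(x)=0$ to the $p^k$-th power recasts it as $A^{p^k}x^{p^{2k}}+Ax=0$, whose nonzero solutions satisfy $x^{p^{2k}-1}=-A^{1-p^k}$. Whether this has solutions, and how many, is controlled by $\gcd(p^{2k}-1,p^n-1)$, and Lemma~\ref{lem:gcd} gives the dichotomy: $\ker L_A=\{0\}$ when $n/d$ is odd, whereas for $n/d$ even the radical has $\F_p$-dimension $d$ or $2d$ according as $-A^{1-p^k}$ does or does not lie in the image of the $(p^{2k}-1)$-th power map.

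For the odd case I would reduce the sum to a single quadratic Gauss sum. By Lemma~\ref{lem:gcd}, $\gcd(p^k+1,p^n-1)=2$, so $x\mapsto x^{p^k+1}$ is two-to-one from $\F_q^*$ onto the nonzero squares, and
\[
\mathscr{S}_k(A,0)=\sum_{x\in\F_q}\bigl(1+\eta(x)\bigr)\chi_1(Ax)=\eta(A)\,G(\eta,\chi_1),
\]
using $\sum_{x\in\F_q}\chi_1(Ax)=0$ for $A\neq 0$ and the substitution $y=Ax$, which pulls out $\eta(A)$ since $\eta^2=1$. The two claimed formulas then follow verbatim from the Davenport--Hasse evaluation of the quadratic Gauss sum over $\F_q$: $G(\eta,\chi_1)=(-1)^{n-1}\sqrt{q}$ when $p\equiv 1\pmod 4$ and $(-1)^{n-1}\imath^{n}\sqrt{q}$ when $p\equiv 3\pmod 4$.

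For the even case $n=2m$, the form $Q$ is rank-defective (note $Q$ vanishes on $\ker L_A$ via $Q(x)=\tfrac12 B(x,x)$), so I would split $\F_q$ into cosets of the radical: on each coset $Q$ descends to a nondegenerate quadratic form on the quotient $\F_q/\mathrm{rad}(B)$, and the sum collapses to $|\mathrm{rad}(B)|$ times a Gauss sum of magnitude $p^m$, producing either $\pm p^m$ or $\pm p^{m+d}$ depending on whether the radical is $d$- or $2d$-dimensional over $\F_p$. To extract the sign $(-1)^{m/d}$ and the threshold $A^{(q-1)/(p^d+1)}=(-1)^{m/d}$, I would read solvability of $x^{p^{2k}-1}=-A^{1-p^k}$ as a norm/index condition using $(q-1)/(p^{2d}-1)=(q-1)/((p^d-1)(p^d+1))$; the character $A\mapsto A^{(q-1)/(p^d+1)}$ detects precisely whether this norm equation is solvable. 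The main obstacle is this even case: pinning down both the sign $(-1)^{m/d}$ and the exact criterion $A^{(q-1)/(p^d+1)}=(-1)^{m/d}$ requires a Witt-type decomposition of the descended quadratic form together with discriminant bookkeeping, in stark contrast to the odd case, which is cleanly a two-to-one substitution plus Davenport--Hasse.
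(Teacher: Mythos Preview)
The paper does not supply its own proof of this theorem: it is quoted from Coulter~\cite{Co98_1} and used as a black box, so there is no in-paper argument to compare against. Your task is therefore really to reconstruct Coulter's evaluation.

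Your treatment of the odd case is clean and complete. The key arithmetic fact is exactly the one you invoke from Lemma~\ref{lem:gcd}: when $n/d$ is odd, $\gcd(p^k+1,q-1)=2$, so $x\mapsto x^{p^k+1}$ is two-to-one onto the nonzero squares and the sum collapses to $\eta(A)\,G(\eta,\chi_1)$, after which Davenport--Hasse gives the stated values. This is essentially how Coulter proceeds as well.

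In the even case your strategy (polarize, identify the radical of $B(x,y)=\Trn(y\,L_A(x))$, descend to a nondegenerate form on the quotient) is the right one, but there is a slip in your radical computation. When $n/d$ is even one has $\gcd(2k,n)=2d$, hence $\gcd(p^{2k}-1,q-1)=p^{2d}-1$; the equation $x^{p^{2k}-1}=-A^{1-p^k}$ therefore has either $0$ or $p^{2d}-1$ nonzero solutions, so $\ker L_A$ has $\F_p$-dimension $0$ or $2d$, not $d$ or $2d$ as you wrote. This is exactly what is needed: in the nondegenerate case the Gauss sum has magnitude $p^{n/2}=p^m$, while with a $2d$-dimensional radical one gets $p^{2d}\cdot p^{(n-2d)/2}=p^{m+d}$, matching the two displayed values. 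Your condition for the large radical, rewritten via $(p^k-1)/(p^d-1)$ and $(q-1)/(p^{2d}-1)=(q-1)/((p^d-1)(p^d+1))$, does reduce to the stated threshold $A^{(q-1)/(p^d+1)}=(-1)^{m/d}$; and you are right that the remaining sign $(-1)^{m/d}$ requires tracking the discriminant of the descended form, which is the genuinely fiddly part Coulter handles by an explicit Witt decomposition.
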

 Therefore, with $A=A_\alpha,B=B_\alpha=0$ ($c$ is fixed), and even $\frac{n}{d}$ (so, $n=2m$), we obtain
 \begin{equation}
 \begin{split}
 \label{eq:Tb_even}
 T_1&=  (-1)^{\frac{m}{d}} p^m \sum_{\substack{\alpha\in X_a\\ A_\alpha^{\frac{q-1}{p^d+1}}\neq (-1)^{\frac{m}{d}}} } \chi_1(\alpha(P'(a)-b)) \\
 &\qquad + (-1)^{\frac{m}{d}+1}\, p^{m+d}  \sum_{\substack{\alpha\in X_a\\ A_\alpha^{\frac{q-1}{p^d+1}}= (-1)^{\frac{m}{d}}} } \chi_1(\alpha(P'(a)-b)).
 \end{split}
 \end{equation} 
 
 Observe that the equation $\displaystyle A_\alpha^{\frac{q-1}{p^d+1}}= (-1)^{\frac{m}{d}}$ is equivalent to
 $\displaystyle \alpha^{\frac{q-1}{p^d+1}} =(-1)^{\frac{m}{d}} (1-c)^{-\frac{q-1}{p^d+1}}$.
 With 
 \begin{align*}
 W_a&=\left\{\alpha\in X_a\,:\, A_\alpha^{\frac{q-1}{p^d+1}}\neq (-1)^{\frac{m}{d}} \right\}\\
   \Sigma&=\sum_{\alpha\in X_a} \chi_1(\alpha (P'(a)-b)),\\
 \Sigma_1&=\sum_{\alpha\in X_a\setminus W_a} \chi_1(\alpha (P'(a)-b)),
 \end{align*}
  the sum~\eqref{eq:Tb_even} becomes  {(for even $\frac{n}{d}$)
  \begin{align*}
 T_1&=  (-1)^{\frac{m}{d}} p^m \left(\Sigma-\Sigma_1 \right)
  + (-1)^{\frac{m}{d}+1}\, p^{m+d}  \Sigma_1\\
 &=  (-1)^{\frac{m}{d}} p^m\, \Sigma +  (-1)^{\frac{m}{d}+1} p^m\, \Sigma_1 \left(p^d+1 \right).
 \end{align*}
 }

 We now consider the case of odd $\frac{n}{d}$.   Recall the definition of the Gauss sum
 \[
 G(\psi,\chi)=\sum_{\alpha\in\F_q^*} \psi(\alpha)\chi(\alpha),
 \]
 where $\psi,\chi$ are some multiplicative, respectively, additive characters of $\F_q$.
  We also define an
   incomplete Gauss sum on a set $U\subseteq \F_q^*$ to be $G_U(\psi,\chi)=\sum_{\alpha\in U} \psi(\alpha)\chi (\alpha)$.
   
 Next, when $\frac{n}{d}$ is odd,  $c$ fixed, and  $A=A_\alpha,B=B_\alpha=0,\epsilon_p=  1$,  if $p\equiv 1\pmod 4$, respectively, $ \epsilon_p=\imath^n$, if $ p\equiv 3\pmod 4$, then
  {
 \begin{align*}
 %\label{eq:Tb_odd_1}
 T_1&=  (-1)^{n-1} \epsilon_p \sqrt{q}\, \eta(1-c) \sum_{\alpha\in X_a}\chi_1(\alpha(P'(a)-b))   \eta(\alpha)\\
 &=  (-1)^{n-1} \epsilon_p \sqrt{q}\, \eta(1-c) G_{ X_a} (\eta, \chi_{P'(a)-b}).
 \end{align*}
}

% If $X_a=\F_q^*$, then  
% (the last identity follows from~\cite[Theorem 5.12 (i)]{LN97})
%  \[
% \displaystyle G(\eta, \chi_{P'(a)-b})=\sum_{\alpha\in\F_q^*}\chi_1( \alpha(P'(a)-b)) \eta(\alpha)=  \eta(P'(a)-b) G(\eta,\chi_1).
% \]
%  From~\cite[Theorem 5.15]{LN97}, we know that  
% \[
% G(\eta,\chi_1)=(-1)^{n-1} \epsilon_p \sqrt{q},
% \]
% and so,  {  when $X_a=\F_q^*$,
% \begin{align}
% \label{eq:T0b_oddGauss}
% T_1&=  \epsilon_p^2 \, q\, \eta((P'(a)-b)(1-c)).
% \end{align}
% }
 
 If $\alpha\in\bar X_a$ (so, $B_\alpha\neq 0$), we shall make use of the following result from~\cite{Co98} (we make slight changes in notations and combine various results). 
 \begin{thm}[\textup{\cite{Co98}}]
\label{thm:Co98}
Let $q=p^n$, $n\geq 2$, $p$ an  odd prime, $1\leq k<n$, $d=\gcd(n,k)$. Let $f(x)=A^{p^k} x^{p^{2k}}+Ax$, for some nonzero $A$. The following statements hold\textup{:}
\begin{enumerate}
\item[$(1)$]
If $f$ is a permutation polynomial over $\F_q$, and  $x_0$ is the unique element such that $f(x_0)=-B^{p^k},B\neq 0$, then\textup{:}
\begin{itemize}
\item[$(i)$] If $\frac{n}{d}$ is odd, then 
\[
\mathscr{S}_k(A,B)=
\begin{cases}
(-1)^{n-1} \sqrt{q}\,\eta(-A)\,\overline{\chi_1(Ax_0^{p^k+1})} &\text{ if } p\equiv 1\pmod 4\\
(-1)^{n-1} \imath^{3n} \sqrt{q}\,\eta(-A)\,\overline{\chi_1(Ax_0^{p^k+1})} &\text{ if } p\equiv 3\pmod 4.
\end{cases}
\]
(the solution is $x_0=-\frac{1}{2}\sum_{j=0}^{\frac{n}{d}-1} (-1)^j A^{-\frac{p^{(2j+1)k}+1}{p^k+1}} B^{p^{(2j+1)k}}$).
\item[$(ii)$] If $\frac{n}{d}$ is even, then $n=2m$, $A^{\frac{q-1}{p^d+1}}\neq (-1)^{\frac{m}{d}}$ and
\[
\mathscr{S}_k(A,B)=(-1)^{\frac{m}{d}} p^m\, \overline{\chi_1(Ax_0^{p^k+1})}.
\]
\end{itemize}
\item[$(2)$] If $f$ is not a permutation polynomial, then, for $B\neq 0$, $\mathscr{S}_k(A,B)=0$, unless, $f(x)=-B^{p^k}$ has a solution $x_0$ \textup{(}this can only happen if $\frac{n}{d}$ is even with $n = 2m$, and $A^{\frac{q-1}{p^d+1}}= (-1)^{\frac{m}{d}}$\textup{)}, in which case
\[
\mathscr{S}_k(A,B)=(-1)^{\frac{m}{d}+1}p^{m+d} \overline{\chi_1(Ax_0^{p^k+1})}.
\]
\end{enumerate}
\end{thm}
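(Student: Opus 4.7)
The plan is to reduce the twisted sum $\mathscr{S}_k(A,B)$ to the untwisted sum $\mathscr{S}_k(A,0)$---already evaluated in Theorem~\ref{thm:Co98_1}---via a ``completing-the-square'' substitution adapted to the Frobenius-twisted quadratic form $Ax^{p^k+1}$. Concretely, I would shift $x=x_0+u$ and expand
\[
A(x_0+u)^{p^k+1}+B(x_0+u)=Au^{p^k+1}+Ax_0^{p^k}u+Ax_0 u^{p^k}+Bu+Ax_0^{p^k+1}+Bx_0.
\]
Using the Frobenius invariance $\chi_1(\alpha y^{p^k})=\chi_1(\alpha^{p^{n-k}}y)$ inside $\chi_1$, the two mixed $u$-terms collapse into the single linear form $\bigl[Ax_0^{p^k}+(Ax_0)^{p^{n-k}}+B\bigr]u$. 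Choosing $x_0$ so that this coefficient vanishes and raising the resulting identity to the $p^k$ power yields exactly $f(x_0)=-B^{p^k}$, with $f(x)=A^{p^k}x^{p^{2k}}+Ax$.

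For part (1), $f$ permutes $\F_q$, so the root $x_0$ is unique and the substitution gives
\[
\mathscr{S}_k(A,B)=\chi_1\bigl(Ax_0^{p^k+1}+Bx_0\bigr)\,\mathscr{S}_k(A,0).
\]
A short Frobenius manipulation of the vanishing identity (multiply by $x_0$, then use $\chi_1((Ax_0)^{p^{n-k}}x_0)=\chi_1(Ax_0^{p^k+1})$) shows $\chi_1(Bx_0)=\chi_1(-2Ax_0^{p^k+1})$, so the phase factor simplifies to $\overline{\chi_1(Ax_0^{p^k+1})}$. Substituting the two sub-cases of Theorem~\ref{thm:Co98_1} for $\mathscr{S}_k(A,0)$ then produces (1)(i) and (1)(ii). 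The explicit alternating-sum formula for $x_0$ arises by iteratively inverting $f$: substituting $x_0=A^{-1}\bigl(-B^{p^k}-A^{p^k}x_0^{p^{2k}}\bigr)$ into itself generates a geometric-type series in powers of $p^{(2j+1)k}$ which truncates after $n/d$ terms by periodicity of the Frobenius modulo $n$.

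For part (2), where $f$ fails to be a permutation---forced by $\frac{n}{d}$ being even with $A^{(q-1)/(p^d+1)}=(-1)^{m/d}$, precisely the exceptional condition of Theorem~\ref{thm:Co98_1}---the equation $f(x_0)=-B^{p^k}$ is either insoluble or has an entire coset of $|\ker f|=p^d+1$ solutions. In the first case, orthogonality of additive characters in the $u$-sum forces $\mathscr{S}_k(A,B)=0$. In the second case, any two preimages differ by $\xi\in\ker f$, and a further Frobenius check shows that the phase $\chi_1(Ax_0^{p^k+1}+Bx_0)$ is constant on the coset; combining this common phase with the exceptional evaluation $\mathscr{S}_k(A,0)=(-1)^{m/d+1}p^{m+d}$ produces the stated formula.

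The main obstacle I foresee is the non-permutation case: one must carefully account for how the inner $u$-sum picks up the factor $|\ker f|$ without over-counting, verify that the phase factor is genuinely constant across the coset of preimages of $-B^{p^k}$, and reconcile the sign conventions $(-1)^{m/d}$ versus $(-1)^{m/d+1}$ inherited from Theorem~\ref{thm:Co98_1}. A secondary difficulty is confirming that the alternating-sum candidate for $x_0$ actually satisfies $f(x_0)=-B^{p^k}$, which requires a telescoping argument on the exponents $(p^{(2j+1)k}+1)/(p^k+1)$ that closes after $n/d$ steps.
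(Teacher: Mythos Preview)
The paper does not prove this theorem: it is quoted from Coulter~\cite{Co98} and stated without proof, so there is no in-paper argument to compare against. Your outline is in fact Coulter's own method---shift $x\mapsto x_0+u$ to annihilate the $u$-linear term and reduce to the already-known $\mathscr{S}_k(A,0)$---and your treatment of the permutation case, including the phase simplification $\chi_1(Ax_0^{p^k+1}+Bx_0)=\overline{\chi_1(Ax_0^{p^k+1})}$, is correct.

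Two points deserve correction. First, in part~(2) you assert $|\ker f|=p^d+1$. But $f(x)=A^{p^k}x^{p^{2k}}+Ax$ is $\F_p$-linear, so its kernel is an $\F_p$-subspace and has cardinality a power of~$p$; in the non-permutation regime (forcing $n/d$ even, so $\gcd(2k,n)=2d$) one has $|\ker f|=p^{2d}$, not $p^d+1$. You may be conflating this with the number of $(p^k{+}1)$-th roots of a fixed element.

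Second, the sentence ``orthogonality of additive characters in the $u$-sum forces $\mathscr{S}_k(A,B)=0$'' when $f(x)=-B^{p^k}$ is insoluble is not justified by the substitution alone. If no $x_0$ kills the linear term, the shift only converts $\mathscr{S}_k(A,B)$ into a phase times $\mathscr{S}_k(A,C)$ for some \emph{nonzero} $C$, which is the same type of sum you started with. The vanishing requires an independent argument---for instance the Parseval identity $\sum_{B\in\F_q}|\mathscr{S}_k(A,B)|^2=q^2$, together with the fact that the $q/p^{2d}$ values of $B$ for which a solution exists already contribute $|\mathscr{S}_k(A,0)|^2=p^{n+2d}$ each, exhausting the total. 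You rightly flag this as the main obstacle, but the sketch as written does not close it.
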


Let $A=A_\alpha=\alpha(1-c)$, 
  $B=B_\alpha=\sum_{i=0}^{n-1} (a_i')^{p^{n-i}}\neq 0$ (so, $\alpha\in\bar X_a$), 
 where $a_i'= A_\alpha  a_i$, for all $0\neq i\neq k$, $a_0'=\alpha a^{p^k}+A_\alpha a_0$, $a_k'=\alpha  a+ A_\alpha a_k$, and $L_\alpha(x)=A_\alpha^{p^k}x^{p^{2k}}+A_\alpha x$. 
 It is known~\cite{ZWW20}   that a linearized polynomial of the form $L(x)=x^{p^r}+\gamma x\in\F_{p^n}$ is a permutation polynomial (PP) if and only if  $(-1)^{n/e} \gamma^{(p^n-1)/(p^e-1)}\neq 1$, where $e=\gcd(n,r)$. 
 It follows that in our case, with $e=\gcd(n,2k)$, $L_\alpha$ is a PP if and only if 
\[
1\neq (-1)^{\frac{n}{e}} A_\alpha ^{(p^k-1)\frac{p^n-1}{p^e-1}}. 
%=(-1)^{\frac{n}{e}} \left(A_\alpha ^{\frac{p^k-1}{p^e-1}} \right)^{p^n-1}=(-1)^{\frac{n}{e}}=-1 \text{if, $n$ is odd}
\] 
From Theorem~\ref{thm:Co98}, if $\frac{n}{d}$ is odd, $\alpha\in\bar X_a$ and $L_\alpha$ is a PP (that is, the above displayed condition holds), then  ($x_\alpha$ is the solution to $L_\alpha(x)=B_\alpha^{p^k}$)
\allowdisplaybreaks
 \begin{align*}
 S_\alpha &= \sum_{x\in \F_q} \chi_1\left(A_\alpha x^{p^k+1}+ B_\alpha   \right)\\
 &=(-1)^{n-1}\mu_p \sqrt{q}\, \eta(-A_\alpha )\,\overline{\chi_1(A_\alpha x_\alpha^{p^k+1})},
\end{align*}
 where $\mu_p=1$, if $p\equiv 1\pmod 4$, and $\mu_p=\imath^{3n}$, if $p\equiv 3\pmod 4$.  
 Recall the
   incomplete Gauss sum on a set $U\subseteq \F_q^*$, namely $G_U(\psi,\chi)=\sum_{\alpha\in U} \psi(\alpha)\chi (\alpha)$.
    Thus, when $\frac{n}{d}$ is odd and $V_a=\{\alpha\in\bar X_a\,:\,A_\alpha^{(p^k-1)\frac{p^n-1}{p^e-1}}\neq (-1)^{\frac{n}{e}} \}$,  then, denoting by $T_2\big|_{V_a}$ the restriction function $T_2$  computed only on $V_a$, and using~\cite[Theorem 5.12 (i)]{LN97} (with $\delta_\alpha =P'(a)-b-(1-c) x_\alpha^{p^k+1}$)
     {
 \allowdisplaybreaks
 \begin{align*}
 T_2\big|_{V_a}&=(-1)^{n-1}\mu_p \sqrt{q}\,\eta(c-1)\\
 &\qquad\times \sum_{x\in V_a}   \eta(\alpha)\,  \chi_1(\alpha(P'(a)-b-(1-c) x_\alpha^{p^k+1}))\\
 &=(-1)^{n-1}\mu_p \sqrt{q}\,\eta(c-1) G_{V_a} (\eta,\chi_{\delta_\alpha }).
 \end{align*}
 }
 Observe that when $\frac{n}{d}$ is odd, then $e=\gcd(n,2k)=\gcd(n,k)=d$ (this is equivalent to  $2^\ell\| n$ and $2^\ell\| k$ for some integer $\ell$, where $2^\ell\|n$ means that $\ell$ is the $2$-valuation of $n$, that is the exact power of $2$ dividing $n$),
  then,
 \[
 A_\alpha^{(p^k-1)\frac{p^n-1}{p^e-1}}=\left(A_\alpha^{\frac{p^k-1}{p^e-1}} \right)^{p^n-1}=1,
 \]
 so, $L_\alpha$ is always a PP (observe that $\frac{n}{d}$ is odd). If that is the case ( $2^\ell\| n$ and $2^\ell\| k$ for some integer $\ell$), then,  
   {
 \begin{align*}
 T_2 &= (-1)^{n-1}\mu_p \sqrt{q}\,\eta(c-1) G_{V_a} (\eta,\chi_{\delta_\alpha }).
 \end{align*}
 }
 
We now consider the case of $\frac{n}{d}$ being even, so $e=2d$.
 If $L_\alpha$ is a PP, and thus, $A_\alpha^{\frac{p^n-1}{p^d+1}}\neq (-1)^{\frac{n}{2d}}$, then
 ($x_\alpha$  is the solution to $L_\alpha(x)=B_\alpha^{p^k}$)
 \begin{align*}
S_\alpha&=  (-1)^{\frac{m}{d}} p^m \chi_1\left(-A_\alpha x_\alpha^{p^k+1}\right),
 \end{align*}
 and when $L_\alpha$ is not a PP (thus, $A_\alpha^{\frac{p^n-1}{p^d+1}}= (-1)^{\frac{n}{2d}}$), but a solution $x_\alpha$ exists to $L_\alpha(x)=B_\alpha^{p^k}$ -- we will call this, condition $(\cP)$, then
 \begin{align*}
S_\alpha&=  (-1)^{\frac{m}{d}+1} p^{m+d} \chi_1\left(-A_\alpha x_\alpha^{p^k+1}\right),
 \end{align*}
 
 As before, we let $V_a=\left\{\alpha\in\bar X_a\,:\, A_\alpha^{\frac{p^n-1}{p^d+1}}\neq (-1)^{\frac{n}{2d}}  \right\}$.
 Putting the previous results together, for   { even $\frac{n}{d}$, we get
 \begin{align*}
 T_2&= (-1)^{\frac{m}{d}} p^{m}\sum_{\alpha\in V_a}  \chi_1\left(\alpha\left(P'(a)-b+(c-1) x_\alpha^{p^k+1}\right)\right)\\
& +(-1)^{\frac{m}{d}+1} p^{m+d}\sum_{\substack{\alpha\in\bar X_a\setminus V_a \\ \alpha \text{ satisfies } (\cP)} } \chi_1\left(\alpha\left(P'(a)-b+(c-1) x_\alpha^{p^k+1}\right)\right).
\end{align*}
}

We thus have shown the following theorem. Let $P(x)=\sum_{i=0}^{n-1} a_i x^{p^i}$ be a linearized polynomial. Recall that for $1\leq k<n$ and $c\in\F_q$, $\alpha\in\F_q^*$, $d=\gcd(n,k), e=\gcd(n,2k)$, we let $P'(a)=P(a)+a^{p^k+1}$, $A_\alpha=\alpha(1-c)$, $B_\alpha=\sum_{i=0}^{n-1} (a_i')^{p^{n-i}}$, 
 where $a_i'= A_\alpha  a_i$, for all $0\neq i\neq k$, $a_0'=\alpha  a^{p^k}+A_\alpha  a_0$ and $a_k'=\alpha (a+(1-c)a_k)$. Further, for fixed $a\in \F_q$, 
 \allowdisplaybreaks
 \begin{align*}
 X_a&=\left\{\alpha\in\F_q^*\,:\,  \alpha a^{p^k}+\alpha^{p^{-k}} a +P^*(\alpha)=0 \right\},\\
 V_a&=\left\{\alpha\in\bar X_a\,:\, A_\alpha^{\frac{p^n-1}{p^d+1}}\neq (-1)^{\frac{n}{2d}}  \right\},\\
 W_a&=\left\{\alpha\in X_a\,:\, A_\alpha^{\frac{q-1}{p^d+1}}\neq (-1)^{\frac{n}{2d}} \right\},\\
   \Sigma&=\sum_{\alpha\in X_a} \chi_1(\alpha (P'(a)-b)),\\
 \Sigma_1&=\sum_{\alpha\in X_a\setminus W_a} \chi_1(\alpha (P'(a)-b)).
 \end{align*}
  We also define an
   incomplete Gauss sum on a set $U\subseteq \F_q^*$, namely, $G_U(\psi,\chi)=\sum_{\alpha\in U} \psi(\alpha)\chi (\alpha)$, and $\delta_\alpha=P'(a)-b-(1-c) x_\alpha^{p^k+1}$, where $x_\alpha$ is the solution of the equation $L_\alpha(x)=B_\alpha^{p^k}$ (we called this, condition $(\cP)$). Let also, $\mu_p=1$, if $p\equiv 1\pmod 4$, and $\mu_p=\imath^{3n}$, if $p\equiv 3\pmod 4$.
\begin{thm}
\label{thm:main2}
Let $F(x)=x^{p^k+1}$ $(p$ is an odd prime, $n\geq 2$, and $k<n)$ be the Gold function, $P(x)=\sum_{i=0}^{n-1} a_i x^{p^i}$ be a linearized polynomial and $c\in\F_{p^n}$.  Then, the $c$-Differential Distribution Table entries  of $G(x)=F(x)+P(x)$ at  $a,b\in\F_{p^n}$ are given by ${_c}\Delta_G(a,b)=1+p^{-n}  T_{a,b}$, where\textup{:}
\begin{enumerate}
\item[$(i)$] Let   $\frac{n}{d}$ be even, $n=2m$. Then
\begin{align*}
%{_c}\Delta_G(a,P'(a))&=1; \\
T_{a,b}&= (-1)^{\frac{m}{d}} p^m\, \Sigma +  (-1)^{\frac{m}{d}+1} p^m\, \Sigma_1 \left(p^d-1 \right)\\
&+(-1)^{\frac{m}{d}} p^{m}\sum_{\alpha\in V_a}  \chi_1\left(\alpha\delta_\alpha\right)
 +(-1)^{\frac{m}{d}+1} p^{m+d}\sum_{\substack{\alpha\in\bar X_a\setminus V_a \\ \alpha \text{ satisfies } (\cP)} } \chi_1\left(\alpha\delta_\alpha\right).
%\left(P'(a)-b+(c-1) x_\alpha^{p^k+1}\right)\right) .
\end{align*}
 \item[$(ii)$] Let  $\frac{n}{d}$ be odd. Then
\begin{align*}
 T_{a,b}&= (-1)^{n-1} \mu_p p^{\frac{n}{2}}\, \eta(1-c) G_{ X_a} (\eta, \chi_{P'(a)-b})\\
 &\qquad +(-1)^{n-1}\mu_p  p^{\frac{n}{2}}\,\eta(c-1) G_{V_a} (\eta,\chi_{\delta_\alpha }).
\end{align*}
% If $X_a=\F_q^*$, then 
%\begin{align*}
% T_{a,b}&=  \epsilon_p^2 \, q\, \eta((P'(a)-b)(1-c)).
%\end{align*}
 \end{enumerate}
\end{thm}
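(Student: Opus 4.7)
The plan is to assemble the theorem directly from the character-sum machinery developed in the preceding section, so the proof is mostly a matter of bookkeeping the cases that Theorems~\ref{thm:Co98_1} and~\ref{thm:Co98} impose. The starting point is Theorem~\ref{thm:cDU_Char}, which gives
\[
{_c}\Delta_G(a,b)=1+\frac{1}{q}\sum_{\alpha\in\F_q^*}\chi_1(-b\alpha)\, U_\alpha,
\]
where $U_\alpha=\sum_{x\in\F_q}\chi_1(\alpha G(x+a)-\alpha c G(x))$. Expanding $G(x+a)-cG(x)$ and using that $\chi_1(y^p)=\chi_1(y)$ (so that every $\chi_1(a_i' x^{p^i})$ becomes $\chi_1((a_i')^{p^{n-i}}x)$), one factors out the constant $\chi_1(\alpha P'(a))$ and collects the linear-in-$x$ coefficient into $B_\alpha$, which reduces $U_\alpha$ to the classical Weil sum $\mathscr{S}_k(A_\alpha,B_\alpha)$ multiplied by $\chi_1(\alpha P'(a))$. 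This is exactly the derivation carried out in the prose above and yields $T_{a,b}=\sum_{\alpha}\chi_1(\alpha(P'(a)-b))S_\alpha$.

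The next step is to partition the sum $T_{a,b}=T_1+T_2$ according to whether $\alpha\in X_a$ (the solution set of $B_\alpha=0$) or $\alpha\in\bar X_a$. For $T_1$, I would apply Theorem~\ref{thm:Co98_1}: when $\frac{n}{d}$ is even ($n=2m$), the value of $\mathscr{S}_k(A_\alpha,0)$ takes exactly two values distinguished by the condition $A_\alpha^{(q-1)/(p^d+1)}=(-1)^{m/d}$, which translates into the sub-partition $X_a=W_a\sqcup(X_a\setminus W_a)$. Writing the contribution as $(-1)^{m/d}p^m\Sigma$ minus the correction coming from $X_a\setminus W_a$, and collecting the extra factor $(p^d+1)$, gives the first line of case $(i)$. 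When $\frac{n}{d}$ is odd, Theorem~\ref{thm:Co98_1} supplies a quadratic-character factor $\eta(A_\alpha)=\eta(1-c)\eta(\alpha)$, producing the incomplete Gauss sum $G_{X_a}(\eta,\chi_{P'(a)-b})$, which is the first term of case $(ii)$.

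For $T_2$, each $\alpha\in\bar X_a$ satisfies $B_\alpha\neq 0$, and one invokes Theorem~\ref{thm:Co98}. The evaluation depends on whether $L_\alpha$ is a permutation polynomial; by~\cite{ZWW20} this is equivalent to $A_\alpha^{(p^k-1)(p^n-1)/(p^e-1)}\neq (-1)^{n/e}$, which in the two parity regimes simplifies respectively to the defining condition of $V_a$ (with $e=2d$ when $\frac{n}{d}$ is even) and to an always-true condition (when $\frac{n}{d}$ is odd, since then $e=d$). In each regime, substituting $\mathscr{S}_k(A_\alpha,B_\alpha)$ produces factors $\overline{\chi_1(A_\alpha x_\alpha^{p^k+1})}=\chi_1(\alpha(c-1)x_\alpha^{p^k+1})$, which combine with $\chi_1(\alpha(P'(a)-b))$ to yield $\chi_1(\alpha\delta_\alpha)$. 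The even case splits further according to whether condition $(\cP)$ holds on $\bar X_a\setminus V_a$, giving the two sums in case $(i)$; the odd case yields the incomplete Gauss sum $G_{V_a}(\eta,\chi_{\delta_\alpha})$ of case $(ii)$.

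The main obstacle is not any single manipulation but the careful enumeration of cases, namely keeping track of the four index sets $X_a, \bar X_a, W_a, V_a$ and of the two orthogonal dichotomies (is $L_\alpha$ a PP? does $(\cP)$ hold?), together with the parity of $\frac{n}{d}$; one must check that every $\alpha\in\F_q^*$ is counted once with the correct $S_\alpha$. Once the partitions are laid out cleanly, Theorems~\ref{thm:Co98_1} and~\ref{thm:Co98} close the argument and the two displayed formulas follow by grouping terms.
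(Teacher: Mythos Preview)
Your proposal is correct and follows essentially the same route as the paper: start from Theorem~\ref{thm:cDU_Char}, reduce $U_\alpha$ to $\chi_1(\alpha P'(a))\,\mathscr{S}_k(A_\alpha,B_\alpha)$ via the Frobenius-invariance of $\chi_1$, split $T_{a,b}=T_1+T_2$ according to $X_a$ versus $\bar X_a$, and then evaluate with Theorems~\ref{thm:Co98_1} and~\ref{thm:Co98} while tracking the sets $W_a,V_a$ and condition $(\cP)$. Your observation that the $T_1$ correction term carries the factor $(p^d+1)$ in fact matches the paper's own derivation (the $(p^d-1)$ appearing in the displayed statement is a typo there), so you have the bookkeeping right.
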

The following corollary is immediate.
\begin{cor}
With the notations of Theorem~\textup{\ref{thm:main2}}, we have\textup{:}
\begin{enumerate}
\item[$(i)$] If  $\frac{n}{d}$ is even, then
%\connie{Of course, in general the Weil/Gauss sums are complex numbers, but I guess, since we know that $T$ has to be real, here we can drop the modulus. Should we say something about that?}
%\pante{I was thinking about that, but argued against. We should let the referee be proud of himself/herself.}
\[
{_c}\Delta_G(a,b)\leq 1+p^{-\frac{n}{2}} |X_a|+p^{-\frac{n}{2}} (p^d-1)|X_a\setminus W_a|+p^{-\frac{n}{2}} |V_a|+p^{-\frac{n}{2}+d}|\bar X_a\setminus V_a|.
\]
\item[$(ii)$]If  $\frac{n}{d}$ is even, then
\[
{_c}\Delta_G(a,b)\leq 1+p^{-\frac{n}{2}} \left(|X_a|+|V_a|\right).
\]
\end{enumerate}
\end{cor}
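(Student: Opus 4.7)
The plan is to derive both inequalities as immediate consequences of Theorem~\ref{thm:main2}, using only the triangle inequality together with the elementary bounds $|\chi_1(y)|=1$ for every $y\in\F_q$ and $|\eta(\alpha)|=1$ for every $\alpha\in\F_q^*$. Since ${_c}\Delta_G(a,b)$ is a nonnegative integer and
$
{_c}\Delta_G(a,b)=1+p^{-n}T_{a,b},
$
the quantity $T_{a,b}$ must be real, so $T_{a,b}\le |T_{a,b}|$, and it suffices to bound $|T_{a,b}|$ term by term.

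For part (i), with $n=2m$ and $n/d$ even, Theorem~\ref{thm:main2}(i) writes $T_{a,b}$ as a sum of four contributions with prefactors $\pm p^m$, $\pm p^m(p^d-1)$, $\pm p^m$, and $\pm p^{m+d}$, respectively, each multiplied by a sum of additive character values indexed by $X_a$, $X_a\setminus W_a$, $V_a$, and a subset of $\bar X_a\setminus V_a$ (those $\alpha$ satisfying $(\cP)$). The triangle inequality gives
\[
|T_{a,b}|\le p^m|X_a|+p^m(p^d-1)|X_a\setminus W_a|+p^m|V_a|+p^{m+d}|\bar X_a\setminus V_a|,
\]
and dividing by $p^n=p^{2m}$ (using $p^m/p^{2m}=p^{-n/2}$ and $p^{m+d}/p^{2m}=p^{-n/2+d}$) yields the stated bound.

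For part (ii), which is the case $n/d$ odd in Theorem~\ref{thm:main2}(ii), the expression for $T_{a,b}$ is a linear combination with prefactors $\pm\mu_p p^{n/2}\eta(\pm(1-c))$ of two incomplete Gauss sums $G_{X_a}(\eta,\chi_{P'(a)-b})$ and $G_{V_a}(\eta,\chi_{\delta_\alpha})$. Since $|\mu_p|=1$, $|\eta(\pm(1-c))|=1$ (we are in the regime $c\neq 1$ implicit throughout), and $|G_U(\eta,\chi)|\le |U|$ by the triangle inequality applied to terms of modulus one, we obtain
\[
|T_{a,b}|\le p^{n/2}|X_a|+p^{n/2}|V_a|,
\]
and dividing by $p^n$ gives ${_c}\Delta_G(a,b)\le 1+p^{-n/2}(|X_a|+|V_a|)$.

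No step presents a genuine obstacle: the heavy lifting (Weil-sum evaluations via \cite{Co98,Co98_1} and the Bluher-type structure of $L_\alpha$) has already been absorbed into Theorem~\ref{thm:main2}, and what remains is the trivial estimate of incomplete character sums. The only mild point to verify is that the index sets $X_a$, $V_a$, $W_a$ and their complements all lie inside $\F_q^*$, which holds by their very definitions, so $\eta$ is nonvanishing throughout and every modulus in sight is exactly one.
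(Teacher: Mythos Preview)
Your proof is correct and is exactly the ``immediate'' argument the paper has in mind: the paper gives no proof beyond the sentence ``The following corollary is immediate,'' and your expansion---bounding each character sum in the expression for $T_{a,b}$ from Theorem~\ref{thm:main2} by the cardinality of its index set via $|\chi_1(\cdot)|=|\eta(\cdot)|=1$ and the triangle inequality, then dividing by $p^n$---is precisely what is intended. Your identification of part~(ii) with the odd $n/d$ case of Theorem~\ref{thm:main2}(ii) is also the correct reading despite the typo in the corollary's hypothesis.
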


\begin{rem}
We can bring more light into  Equation~\eqref{eq:X_asol}, if we were to consider it as an equation in $a$, not $\alpha$.
We know~\textup{\cite{ZWW20}}   that a linearized polynomial of the form $L(x)=x^{p^r}+\gamma x\in\F_{p^n}$ is a permutation polynomial if and only if the relative norm $N_{\F_{p^n}/\F_{p^d}}(\gamma)\neq 1$, that is, $(-1)^{n/d} \gamma^{(p^n-1)/(p^d-1)}\neq 1$, where $d=\gcd(n,r)$. In our case, $r=2k$ and  $\gamma=\alpha^{1-p^k}$ \textup{(}the condition can be written in terms of $\gamma$, or $\gamma^{-1}$\textup{)}, and so, for fixed $\alpha\neq 0$ if $\alpha^{p^k} x^{p^{2k}} +\alpha x$ is a PP (that is, $1\neq (-1)^{\frac{n}{e}} \alpha ^{\frac{p^n-1}{p^k+1}}$), there is a unique root $a$ of the above equation.  

We can also do the general case, when perhaps the previous linearized polynomial is not a PP, by using~\textup{\cite{CH04}}. With $t=\frac{n}{\gcd(2k,n)}$, and the notations of~\textup{\cite{CH04}}, we let
\begin{align*}
\alpha_{t-1}:&=(-\alpha^{1-p^k})^{1+p^{2k}+\cdots+p^{2k(t-1)}}=(-1)^t \alpha^{\frac{1-p^{2kt}}{p^k+1}},\\
\beta_{t-1}:&=\sum_{i=0}^{t-2}(-\alpha^{1-p^k})^{\sum_{j=i}^{t-2}p^{2k(j+1)}}\left(-\frac{P^*(\alpha)}{\alpha}\right)^{p^{2ki}}+\left(-\frac{P^*(\alpha)}{\alpha}\right)^{p^{2k(t-1)}}\\
&=\sum_{i=0}^{t-2} (-1)^{t-i} \alpha^{  \frac{p^{2k(i+1)}-p^{2kt}}{p^k+1}}\left(\frac{P^*(\alpha)}{\alpha}\right)^{p^{2ki}}-\left(\frac{P^*(\alpha)}{\alpha}\right)^{p^{2k(t-1)}}\\
&=\sum_{i=0}^{t-2} (-1)^{t-i} \alpha^{  \frac{p^{2k(i+1)}-p^{2kt}}{p^k+1}-p^{2ki}}\left(P^*(\alpha)\right)^{p^{2ki}} -\left(\frac{P^*(\alpha)}{\alpha}\right)^{p^{2k(t-1)}}\\
&= \alpha^{  \frac{-p^{2kt}}{p^k+1}}\sum_{i=0}^{t-2} (-1)^{t-i}  \left(\alpha^{\frac{p^{2k}}{p^k+1}-1}P^*(\alpha)\right)^{p^{2ki}} -\left(\frac{P^*(\alpha)}{\alpha}\right)^{p^{2k(t-1)}} \\
&=(-1)^t  \alpha^{  \frac{-p^{2kt}}{p^k+1}}\sum_{i=0}^{t-1} (-1)^{i}  \left(\alpha^{\frac{p^{2k}}{p^k+1}-1}P^*(\alpha)\right)^{p^{2ki}}.
\end{align*}
\textup{(}Though the final expressions are in $\F_q$, we regard the various terms in this last identity to belong in an extension of $\F_q$, otherwise a factor like $\alpha^{\frac{p^{2k}}{p^k+1}-1}$ makes little sense, for some $\alpha$'s.\textup{)}

 By~\textup{\cite{CH04}}, if $\alpha_{t-1}=1$ and $\beta_{t-1}\neq0$, there are no solutions $a$ for Equation~\eqref{eq:X_asol}\textup{;} if $\alpha_{t-1}=1$ and $\beta_{t-1}=0$, there are $p^d$ solutions\textup{;} and, if $\alpha_{t-1}\neq1$, there is one solution.
\end{rem}
 
 \section{Entries of the $c$-DDT for the perturbed Gold function via a linearized polynomial, $p$ even}
 \label{sec4}
 
 Here $q=2^n$. We let as before $\mathscr{S}_k(A,B)=\sum_{x\in \F_q} \chi_1\left(A x^{p^k+1}+ B x\right)$, and $A_\alpha,B_\alpha$ defined as in the previous section. In this case, using~\cite{Co99}, we have that, if $\frac{n}{d}$ is odd, where $d=\gcd(n,k)$,
 \[ 
 \mathscr{S}_k(A_\alpha,B_\alpha)=
 \begin{cases}
 0& \mbox{ if } \Trn(B_\alpha C_\alpha^{-1})\neq1\\
 \pm2^{\frac{n+d}{2}}& \mbox{ if } \Trn(B_\alpha C_\alpha^{-1})=1.
 \end{cases}
 \]
 where $C_\alpha$ is the only element such that $C_\alpha^{2^k+1}=A_\alpha$ (by Lemma~\ref{lem:gcd}, $\gcd(2^k+1,2^n-1)=1$, when $\frac{n}{d}$ is odd).
 
 In \cite{Co99}, combining Lemma 4.3 and Theorem 4.6, we see further that:
 $\mathscr{S}_k(1,1)=\left(\frac{2}{n/d}\right)^d2^{\frac{n+d}{2}}$, where $\left(\frac{2}{s}\right)$ is the Jacobi symbol, and 
 $\mathscr{S}_k(A,B)=\chi_1(\gamma^{2^k+1}+\gamma)\mathscr{S}_k(1,1)$, with $B_\alpha C_\alpha^{-1}=\gamma^{2^{2k}}+\gamma+1$, for some $\gamma\in\F_q$.
  {In conclusion, for $\frac{n}{d}$ odd, with $C_\alpha$ and $\gamma$ as before, and denoting by $W=\left\{\alpha : \Trn(B_\alpha C_\alpha^{-1})=1\right\}$ and $  \Sigma_2=\sum_{\alpha\in W} \chi_1(\alpha (P'(a)-b)+\gamma^{2^k+1}+\gamma)$, then
 \[
 {_c}\Delta_G(a,b)= 1+\left(\frac{2}{n/d}\right)^d2^{\frac{d-n}{2}} \Sigma_2.
 \]
 
 } 
 
 For $\frac{n}{d}$ even, we use Theorem 5.3 of \cite{Co99}, which we cite here for the convenience of the reader.
 \begin{thm}[\textup{\cite{Co99}}]
 Let $B\in\F_q^*$, $q=2^n$, $g$ be a primitive element of $\F_q$, and suppose that $\frac{n}{d}$ is even so that $n=2m$ for some integer $m$. Let $f(x)=A^{2^k}x^{2^{2k}}+Ax$. The following statements hold\textup{:} 
 \begin{itemize}
\item[$(i)$] If $A\neq g^{t(2^d+1)}$ for some integer $t$ then $f$ is a PP. Let $x_0\in\F_q$ be the unique element satisfying $f(x_0)=B^{2^k}$. Then,
\[
\mathscr{S}_k(A,B)=(-1)^{\frac{m}{d}}2^m\chi_1\left(Ax_0^{2^k+1}\right).
\]
\item[$(ii)$] If $A= g^{t(2^d+1)}$ for some integer $t$ then $\mathscr{S}_k(A,B)=0$ unless the equation $f(x)=B^{2^k}$ is solvable. If the equation is solvable, with solution $x_0$, say, then
\[
\mathscr{S}_k(A,B)=
\begin{cases}
(-1)^{\frac{m}{d}+1}2^{m+d}\chi_1\left(Ax_0^{2^k+1}\right)&\mbox{ if } \Trd(A)=0\\
(-1)^{\frac{m}{d}}2^{m}\chi_1\left(Ax_0^{2^k+1}\right)&\mbox{ if } \Trd(A)=0.
\end{cases}
\]
\end{itemize}
 \end{thm}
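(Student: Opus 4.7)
The plan is to evaluate $\mathscr{S}_k(A,B) = \sum_{x \in \F_q} \chi_1(A x^{2^k+1} + B x)$ by first reducing to the case $B=0$ via a completing-the-square substitution, and then pinning down $\mathscr{S}_k(A,0)$ through a kernel computation combined with a quadratic-form sign analysis.

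\textbf{Step 1 (reduce to $B=0$).} For any $x_0 \in \F_q$, the substitution $x \mapsto x + x_0$, together with the characteristic-$2$ expansion $(x+x_0)^{2^k+1} = x^{2^k+1} + x_0 x^{2^k} + x_0^{2^k} x + x_0^{2^k+1}$ and the Frobenius identity $\chi_1(A x_0 x^{2^k}) = \chi_1((A x_0)^{2^{n-k}} x)$, gives
\begin{align*}
\mathscr{S}_k(A,B) &= \chi_1\bigl(A x_0^{2^k+1} + B x_0\bigr) \sum_{x \in \F_q} \chi_1\Bigl(A x^{2^k+1} + \bigl((A x_0)^{2^{n-k}} + A x_0^{2^k} + B\bigr) x\Bigr).
\end{align*}
Raising the linear coefficient to the $2^k$-th power yields $f(x_0) + B^{2^k}$, so the bracketed term vanishes precisely when $f(x_0) = B^{2^k}$. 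A short computation using $B = (A x_0)^{2^{n-k}} + A x_0^{2^k}$ and the fact that $2\Trn(\cdot) = 0$ in characteristic $2$ shows $\chi_1(B x_0) = 1$, hence
\begin{align*}
\mathscr{S}_k(A,B) = \chi_1\bigl(A x_0^{2^k+1}\bigr)\, \mathscr{S}_k(A,0).
\end{align*}
When $f$ is not a PP, different choices of $x_0$ differ by elements of $\ker f$ and one checks that $\chi_1(A x_0^{2^k+1})$ is independent of that choice.

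\textbf{Step 2 (evaluate $|\mathscr{S}_k(A,0)|^2$).} Expand the squared modulus as a double sum, substitute $x = y+z$, and apply the same Frobenius trick to the inner sum over $y$: it vanishes unless $f(z) = 0$, giving $|\mathscr{S}_k(A,0)|^2 = q \cdot \#\ker f$. When $n/d$ is even, $\gcd(2^{2k}-1, 2^n-1) = 2^{2d}-1$, so $\#\ker f$ equals $1$ when $f$ is a PP and $2^{2d}$ otherwise, yielding $|\mathscr{S}_k(A,0)| \in \{2^m, 2^{m+d}\}$. Since $\chi_1$ is $\{\pm1\}$-valued, $\mathscr{S}_k(A,0)$ is a real number. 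Writing $f(x) = A^{2^k}x^{2^{2k}} + Ax$ and solving $x^{2^{2k}-1} = A^{1-2^k}$ shows that $f$ fails to be a PP precisely when $A$ lies in the $(2^d+1)$-th power subgroup $\{g^{t(2^d+1)}\}$ of $\F_q^*$, matching the dichotomy between (i) and (ii); the solvability clause in (ii) is exactly the consistency condition for $f(x) = B^{2^k}$ when $\ker f \neq \{0\}$.

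\textbf{Step 3 (sign determination).} The hardest part is identifying the sign of the real number $\mathscr{S}_k(A,0)$. View $Q_A(x) = \Trn(A x^{2^k+1})$ as an $\F_2$-quadratic form on the $n$-dimensional $\F_2$-space $\F_q$, whose polar bilinear form $B_A(x,y) = \Trn(A(x y^{2^k} + x^{2^k} y))$ has radical equal to $\ker f$. By the classification of quadratic forms over $\F_2$, $\mathscr{S}_k(A,0) = \varepsilon \cdot 2^{(n + \dim\mathrm{rad})/2}$ with $\varepsilon = +1$ for hyperbolic (plus) type and $\varepsilon = -1$ for the anisotropic (minus) type. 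Compute the sign at a canonical representative of each coset of the $(2^d+1)$-th power subgroup—for example $A = 1$, where the explicit decomposition of $Q_1$ over the subfield $\F_{2^{2d}}$ makes the Witt type transparent—and then propagate to arbitrary $A$ via the substitution $x \mapsto \gamma x$, which sends $\mathscr{S}_k(A,0) \mapsto \mathscr{S}_k(A \gamma^{2^k+1}, 0)$; by Lemma~\ref{lem:gcd} the image of $\gamma \mapsto \gamma^{2^k+1}$ coincides with the $(2^d+1)$-th power subgroup, so the sign is constant on each coset. Matching these signs gives $(-1)^{m/d}$ in (i) and $(-1)^{m/d+1}$ in (ii); reintroducing $\chi_1(A x_0^{2^k+1})$ from Step 1 completes the proof.

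The main obstacle is Step 3: explicitly identifying the Witt type of $Q_A$ with the coset of $A$ modulo $(2^d+1)$-th powers, and tracking the combinatorial sign $(-1)^{m/d}$ correctly in the degenerate (non-PP) case, where the radical inflates the sum by the factor $2^d$ and simultaneously flips the sign relative to the PP case.
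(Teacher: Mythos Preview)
The paper does not prove this theorem at all: it is quoted as ``Theorem~5.3 of~\cite{Co99}, which we cite here for the convenience of the reader,'' so there is no argument in the paper to compare yours against. Your outline is in fact the standard route Coulter takes in~\cite{Co99}: the substitution $x\mapsto x+x_0$ to kill the linear term (your Step~1), the computation of $|\mathscr{S}_k(A,0)|^2$ via the kernel of $f$ (your Step~2), and the sign determination through the Witt type of the $\F_2$-quadratic form $x\mapsto\Trn(Ax^{2^k+1})$ (your Step~3). One small omission: you never argue that $\mathscr{S}_k(A,B)=0$ in case~(ii) when $f(x)=B^{2^k}$ is \emph{unsolvable}. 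This drops out of your Step~2 applied to $B\neq 0$: the same double-sum manipulation gives $|\mathscr{S}_k(A,B)|^2=q\sum_{z\in\ker f}\chi_1(Bz)$, and the character sum over the $\F_2$-subspace $\ker f$ vanishes precisely when $B\notin(\ker f)^{\perp}$, which one checks is equivalent to $B^{2^k}\notin\mathrm{Im}\,f$.

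There is, however, a genuine discrepancy between your conclusion and the statement as printed in the paper. In case~(ii) the paper records a $\Trd(A)$ dichotomy allowing magnitude either $2^{m+d}$ or $2^m$, whereas your Step~2 forces $|\mathscr{S}_k(A,0)|^2=q\cdot|\ker f|=2^{2m+2d}$ whenever $A$ is a $(2^d+1)$-th power (since then $|\ker f|=2^{2d}$), with no dependence on $\Trd(A)$. Your computation is correct: the paper's transcription of Coulter's theorem is visibly garbled (note that both subcases carry the identical hypothesis ``$\Trd(A)=0$''), and the analogous odd-characteristic statement the paper quotes as Theorem~\ref{thm:Co98}(2) likewise has no such trace dichotomy. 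So your single formula $\mathscr{S}_k(A,B)=(-1)^{m/d+1}2^{m+d}\chi_1(Ax_0^{2^k+1})$ in the solvable non-PP case is the right one; the extra branch in the paper's version is spurious.
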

  {Then, if $\frac{n}{d}$ is even, so that $n=2m$, and denoting by 
 \begin{align*}
 Y&=\{\alpha: A_\alpha\neq g^{t(2^d+1)}\}, \\
 Z_1&=\{\alpha: A_\alpha= g^{t(2^d+1)}, f(x)=B_\alpha^{2^k}\mbox{ is solvable, and }\Tr_d(A_\alpha)\neq0\},\\ 
 Z_2&=\{\alpha: A_\alpha= g^{t(2^d+1)}, f(x)=B_\alpha^{2^k}\mbox{ is solvable, and }\Tr_d(A_\alpha)=0\},
 \end{align*} 
 then
 \allowdisplaybreaks
 \begin{align*}
 {_c}\Delta_G(a,b)&= 1+(-1)^{\frac{m}{d}}2^{-m}\left(\sum_{Y\cup Z_1}\chi_1\left(A_\alpha x_\alpha^{2^k+1}+\alpha(P'(a)-b)\right)\right.\\
 &\left.-2^d\sum_{Z_2}\chi_1\left(A_\alpha x_\alpha^{2^k+1}+\alpha(P'(a)-b)\right)\right).
 \end{align*}
 Denoting further, for any set $U$, by $\Sigma_U=\sum_{U}\chi_1\left(A_\alpha x_\alpha^{2^k+1}+\alpha(P'(a)-b)\right)$, then
  \[
 {_c}\Delta_G(a,b)= 1+(-1)^{\frac{m}{d}}2^{-m}\left(\Sigma_{Y\cup Z_1}-2^d\Sigma_{Z_2}\right).
 \]
 } 
 We have then proven the following theorem (with the above notations).
 \begin{thm}
 \label{thm:main3}
 Let $G(x)=x^{2^k+1}+P(x)$ be a perturbation of the Gold function on $\F_{2^n}$ (of primitive element $g$), where $P$ is a linearized polynomial, and $d=\gcd(n,k)$. For each $\alpha\in\F_{2^n}^*$, we let $f_\alpha(x)=A_\alpha^{2^k}x^{2^{2k}}+A_\alpha x$, where $A_\alpha,B_\alpha$ are defined in Section~\textup{\ref{sec3}}. Then, the $c$-Differential Distribution Table entries  of $G(x)$ at  $a,b\in\F_{2^n}$ are given by\textup{:}
  \begin{itemize}
 \item[$(i)$] If $\frac{n}{d}$ is odd,  $C_\alpha= A_\alpha ^{\frac{1}{2^k+1}}$, $B_\alpha C_\alpha^{-1}=\gamma^{2^{2k}}+\gamma+1$, for some $\gamma\in\F_{2^n}$, $W=\left\{\alpha : \Trn(B_\alpha C_\alpha^{-1})=1\right\}$ and $  \Sigma_2=\sum_{\alpha\in W} \chi_1\left(\alpha \left(P'(a)-b\right)+\gamma^{2^k+1}+\gamma\right)$,
 \[
 {_c}\Delta_G(a,b)= 1+\left(\frac{2}{n/d}\right)^d2^{\frac{d-n}{2}} \Sigma_2,
 \]
 where $\left(\frac{2}{s}\right)$ is the Jacobi symbol.
 \item[$(ii)$] If $\frac{n}{d}$ is even, so that $n=2m$, and denoting by $Y=\left\{\alpha: A_\alpha\neq g^{t(2^d+1)}\right\}$,  $Z_1=\left\{\alpha: A_\alpha= g^{t(2^d+1)}, \Tr_d(A_\alpha)\neq0, f_\alpha(x)=B_\alpha^{2^k}\mbox{ is solvable}\right\}$, $Z_2=\left\{\alpha: A_\alpha= g^{t(2^d+1)}, \Tr_d(A_\alpha)=0, f_\alpha(x)=B_\alpha^{2^k}\mbox{ is solvable}\right\}$, and, for any set $U$, letting $\Sigma_U=\sum_{U}\chi_1\left(A_\alpha x_\alpha^{2^k+1}+\alpha(P'(a)-b)\right)$,
 \[
 {_c}\Delta_G(a,b)= 1+(-1)^{\frac{m}{d}}2^{-m}\left(\Sigma_{Y\cup Z_1}-2^d\Sigma_{Z_2}\right).
 \]
 \end{itemize}
 \end{thm}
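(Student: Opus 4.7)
The plan is to mirror the derivation of Theorem~\ref{thm:main2}, replacing the odd-characteristic evaluations of the quadratic Weil sum $\mathscr{S}_k(A,B)$ by the characteristic-two counterparts from \cite{Co99}. Concretely, I would start by applying Theorem~\ref{thm:cDU_Char} to $G(x)=x^{2^k+1}+P(x)$ and expand
\[
G(x+a)+cG(x) = (1+c)x^{2^k+1} + a\,x^{2^k} + a^{2^k}x + (1+c)P(x) + P'(a),
\]
so that after the substitution $\chi_1(\alpha\cdot)$ and the usual identity $\chi_1(y^2)=\chi_1(y)$, the inner sum factors as $U_\alpha = \chi_1(\alpha P'(a))\,\mathscr{S}_k(A_\alpha,B_\alpha)$ with $A_\alpha,B_\alpha$ exactly the objects already introduced in Section~\ref{sec3}. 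Combining with the outer factor $\chi_1(-b\alpha)$ produces the phase $\chi_1(\alpha(P'(a)-b))$, and the remaining task is purely the evaluation of $\mathscr{S}_k(A_\alpha,B_\alpha)$ and its summation over $\alpha\in\F_{2^n}^*$.

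For part (i), when $\tfrac{n}{d}$ is odd, Lemma~\ref{lem:gcd} gives $\gcd(2^k+1,2^n-1)=1$, so the $(2^k+1)$-th root $C_\alpha=A_\alpha^{1/(2^k+1)}$ is well defined and unique. Then I would apply the cited result of Coulter: $\mathscr{S}_k(A_\alpha,B_\alpha)=0$ unless $\Trn(B_\alpha C_\alpha^{-1})=1$, while on $W$ the identity $\mathscr{S}_k(A_\alpha,B_\alpha)=\chi_1(\gamma^{2^k+1}+\gamma)\,\mathscr{S}_k(1,1)$ together with $\mathscr{S}_k(1,1)=\bigl(\tfrac{2}{n/d}\bigr)^d 2^{(n+d)/2}$ (from Lemma~4.3 and Theorem~4.6 of \cite{Co99}) gives the single closed-form summand. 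Restricting the $\alpha$-sum to $W$ and dividing by $q=2^n$ yields the stated formula with prefactor $2^{(d-n)/2}$.

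For part (ii), when $\tfrac{n}{d}$ is even and $n=2m$, I would invoke the quoted Theorem~5.3 of \cite{Co99}, which partitions the relevant $\alpha$ into three classes according to whether $f_\alpha(x)=A_\alpha^{2^k}x^{2^{2k}}+A_\alpha x$ is a permutation polynomial (the set $Y$), or is not a PP but $f_\alpha(x)=B_\alpha^{2^k}$ is still solvable; in the latter situation the trace condition $\Trd(A_\alpha)\neq 0$ versus $=0$ separates $Z_1$ from $Z_2$. Outside $Y\cup Z_1\cup Z_2$ the sum vanishes; on $Y\cup Z_1$ one has $\mathscr{S}_k(A_\alpha,B_\alpha)=(-1)^{m/d}2^m\chi_1(A_\alpha x_\alpha^{2^k+1})$, whereas on $Z_2$ there is the extra factor $-2^d$. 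Packaging the resulting $\alpha$-sum into the notation $\Sigma_U$ and dividing by $q=2^n$ produces the announced identity.

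The main obstacle is bookkeeping rather than substance: one has to verify that, with $A_\alpha=\alpha(1+c)$ and $B_\alpha$ as defined, the classification $Y/Z_1/Z_2$ is exhaustive modulo the vanishing locus, and that the $x_\alpha$ appearing in $\Sigma_{Y\cup Z_1}$ and $\Sigma_{Z_2}$ is consistently the unique (respectively, chosen) solution of $f_\alpha(x)=B_\alpha^{2^k}$. The sign discrepancy in the two $\Trd(A_\alpha)$ branches of the cited Theorem~5.3 is precisely what forces the $-2^d$ weight on $\Sigma_{Z_2}$ relative to $\Sigma_{Y\cup Z_1}$, so the only real care needed is to align the two branches correctly; once that is done, the two formulas of Theorem~\ref{thm:main3} follow immediately.
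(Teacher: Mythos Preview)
Your proposal is correct and follows essentially the same route as the paper: apply Theorem~\ref{thm:cDU_Char} to reduce ${_c}\Delta_G(a,b)$ to a sum of $\chi_1(\alpha(P'(a)-b))\,\mathscr{S}_k(A_\alpha,B_\alpha)$ over $\alpha\in\F_{2^n}^*$, then evaluate $\mathscr{S}_k(A_\alpha,B_\alpha)$ via Coulter's characteristic-two results in~\cite{Co99} (the trace criterion and the value $\mathscr{S}_k(1,1)=\left(\tfrac{2}{n/d}\right)^d 2^{(n+d)/2}$ for $\tfrac{n}{d}$ odd, and Theorem~5.3 for $\tfrac{n}{d}$ even), and finally partition the $\alpha$-range into $W$, respectively $Y,Z_1,Z_2$, exactly as the paper does. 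The only cosmetic difference is that the paper does not re-derive the reduction $U_\alpha=\chi_1(\alpha P'(a))\mathscr{S}_k(A_\alpha,B_\alpha)$ explicitly in Section~\ref{sec4}, since it is carried over verbatim from Section~\ref{sec3}.
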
 
 
 The following corollary is immediate.
\begin{cor}
With the notations of Theorem~\textup{\ref{thm:main3}}, we have\textup{:}
\begin{enumerate}
\item[$(i)$] If  $\frac{n}{d}$ is even, then
\[
{_c}\Delta_G(a,b)\leq 1+ 2^{\frac{d-n}{2}} \left|\left\{\alpha : \Trn(B_\alpha C_\alpha^{-1})=1\right\} \right|.
\]
\item[$(ii)$]If  $\frac{n}{d}$ is even, then
\[
{_c}\Delta_G(a,b)\leq 1+ 2^{-\frac{n}{2}}\left(|Y\cup Z_1|+2^d|Z_2|\right).
\]
\end{enumerate}
\end{cor}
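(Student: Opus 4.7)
The plan is to derive both bounds as direct consequences of the exact formulas supplied by Theorem~\ref{thm:main3}, combined with the trivial observation that any additive character $\chi_1$ takes values on the unit circle and that both the Jacobi symbol $\left(\tfrac{2}{n/d}\right)^d$ and the $(-1)^{m/d}$ factor have absolute value $1$. As a preliminary remark, the hypothesis of part $(i)$ appears to be a typo: the formula matches the \emph{odd} branch of Theorem~\ref{thm:main3}, so throughout the proof of $(i)$ one should read "$\frac{n}{d}$ is odd".

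For part $(i)$, I would start from the identity
\[
{_c}\Delta_G(a,b) - 1 = \left(\tfrac{2}{n/d}\right)^d 2^{\frac{d-n}{2}}\, \Sigma_2,
\]
and bound $|\Sigma_2| = \bigl|\sum_{\alpha \in W} \chi_1(\alpha(P'(a)-b) + \gamma^{2^k+1}+\gamma)\bigr| \leq |W|$ by the triangle inequality, since each summand has modulus $1$. Combining this with $|(\tfrac{2}{n/d})^d| = 1$ yields
\[
{_c}\Delta_G(a,b) \leq 1 + \bigl|{_c}\Delta_G(a,b) - 1\bigr| \leq 1 + 2^{\frac{d-n}{2}}\,|W|,
\]
which is the stated inequality once $W$ is rewritten as $\{\alpha : \Trn(B_\alpha C_\alpha^{-1}) = 1\}$.

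For part $(ii)$, I would apply the same strategy to
\[
{_c}\Delta_G(a,b) - 1 = (-1)^{m/d}\, 2^{-m}\bigl(\Sigma_{Y \cup Z_1} - 2^d\, \Sigma_{Z_2}\bigr).
\]
Each summand in $\Sigma_{Y\cup Z_1}$ and $\Sigma_{Z_2}$ is of the form $\chi_1(A_\alpha x_\alpha^{2^k+1} + \alpha(P'(a)-b))$ and therefore has modulus $1$, so $|\Sigma_{Y\cup Z_1}| \leq |Y \cup Z_1|$ and $|\Sigma_{Z_2}| \leq |Z_2|$. The triangle inequality then gives
\[
\bigl|\Sigma_{Y\cup Z_1} - 2^d\,\Sigma_{Z_2}\bigr| \leq |Y \cup Z_1| + 2^d\,|Z_2|,
\]
and since $|(-1)^{m/d}| = 1$ we conclude ${_c}\Delta_G(a,b) \leq 1 + 2^{-m}(|Y\cup Z_1| + 2^d|Z_2|) = 1 + 2^{-n/2}(|Y\cup Z_1| + 2^d|Z_2|)$.

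There is no substantive obstacle here; the corollary is purely a modulus estimate for the explicit character-sum representations already established in Theorem~\ref{thm:main3}. The only subtle point worth verifying is the sign management: one uses the fact that ${_c}\Delta_G(a,b)$ is a non-negative integer, hence real, so one is entitled to pass from the complex identity to ${_c}\Delta_G(a,b) - 1 \leq |{_c}\Delta_G(a,b) - 1|$ and bound term-by-term without worrying about cancellation.
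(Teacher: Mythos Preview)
Your proposal is correct and matches the paper's approach: the paper itself offers no proof beyond the phrase ``The following corollary is immediate,'' and the triangle-inequality/modulus-one argument you outline is precisely the intended justification. Your observation that the hypothesis in part~$(i)$ should read ``$\frac{n}{d}$ is odd'' is also correct --- the formula there corresponds to the odd branch of Theorem~\ref{thm:main3}.
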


\section{Computational results}

In this section, we give the maximal $c$-differential uniformity over $\F_{2^n}$ for the concrete Gold perturbation $G(x)=x^{2^k+1}+x^{2^i}+x^{2^j}$,  for $2\leq n\leq 6$, and all $0\leq i<j<n$, $1\leq k<n$. We will also include (for comparison purposes) the $c$-differential uniformity ($c$DU) for the Gold function under the row~$00$ (in~\cite{RS20}, the $c$-differential uniformity of the Gold function is completely described when $\gcd(n,k)=1$ and also when $\gcd(n,k)>1$, under some technical conditions). We shall denote by $\beta_G=\max_{c\neq 1} {_c}\beta_G$. For $n=2$, $\beta_G=3$, which the same as for the Gold function. From the tables below we see that the  $c$-differential uniformity  of the perturbation fluctuates, in some instances being three times as much, e.g, $n=6,k=1, (i,j)=(3,4)$. Furthermore, there are cases when it does decrease,  e.g, $n=6,k=3, (i,j)=(2,3)$.

\vskip-.25cm 
%\captionsetup{labelfont=bf,singlelinecheck=false, position=above}
\captionsetup{width=4.0cm,position=top}

\begin{table}[h]
\footnotesize
\floatbox[\capbeside]{table}
{\caption{\small Maximal $c$-differential uniformity $\beta_G$, for $n=3$}}
{\flushleft
\begin{tabular}{|c|c|c|}
\hline
$(i,j)$ & $k=1$ & $k=2$  \\ \hline 
(0,0) & 3 &3\\ \hline
(0,1) & 3 & 4 \\ \hline
(0,2) & 4 & 3\\ \hline
(1,2) & 4 & 4\\ \hline
\end{tabular}}
\end{table}

 %\vskip-.3cm
 
\begin{table}[!htbp]
\footnotesize
\floatbox[\capbeside]{table}
{\caption{\small Maximal $c$-differential uniformity $\beta_G$, for $n=4$}}
{\flushleft
\begin{tabular}{|c|c|c|c|}
\hline
$(i,j)$ & $k=1$ & $k=2$ & $k=3$ \\ \hline 
(0,0) &  3 & 5 &3 \\ \hline
(0,1) & 3 &  5&  4 \\ \hline
(0,2) &  4 & 5 & 6 \\ \hline
(0,3) &  6 & 5  & 3 \\ \hline
(1,2) & 4  & 5  & 5 \\ \hline
(1,3) &  6 & 5  & 4 \\ \hline
(2,3) & 5 &  5 & 6 \\ \hline
\end{tabular}}
\end{table}

 %\vskip-.4cm  

\begin{table}[!htbp]
\footnotesize
\floatbox[\capbeside]{table}
{\caption{\small Maximal $c$-differential uniformity $\beta_G$, for $n=5$}}
{\flushleft
\begin{tabular}{|c|c|c|c|c|}
\hline
$(i,j)$ & $k=1$ & $k=2$ & $k=3$ & $k=4$ \\ \hline 
(0,0) & 3 & 3 & 3 &3 \\ \hline
(0,1) & 3 &5 & 5 & 4 \\ \hline
(0,2) & 4 & 3& 6 & 6 \\ \hline
(0,3) & 6 & 5 & 3& 6\\ \hline
(0,4) & 6  &  6 & 5& 3 \\ \hline
(1,2) & 4 & 5  & 6&7\\ \hline
(1,3) & 6 & 7  &5 &6 \\ \hline
(1,4)& 6 &6 &7 &4  \\ \hline
(2,3)& 7 & 5&6 &5 \\ \hline
(2,4)& 6 &6 & 6& 6\\ \hline
(3,4)& 5 & 6& 5& 6\\ \hline
\end{tabular}}
\end{table}

  \vskip-.3cm
  
\begin{table}[!htbp]
\footnotesize
\floatbox[\capbeside]{table}
{\caption{\small  Maximal $c$-differential uniformity $\beta_G$, for $n=6$}}
{%\flushleft
\begin{tabular}{|c|c|c|c|c|c|}
\hline
$(i,j)$ & $k=1$ & $k=2$ & $k=3$ & $k=4$ & $k=5$ \\ \hline 
(0,0) & 3 & 5& 9&5  & 3\\ \hline
(0,1) & 3 & 5&9  & 5& 4 \\ \hline
(0,2) & 4 & 5&6  &5 & 7 \\ \hline
(0,3) &7  & 5 & 9 &10 & 7\\ \hline
(0,4) & 7 &  5 & 9& 5& 6\\ \hline
(0,5) & 6 & 10  &6 & 5& 3 \\ \hline
(1,2) &4  &  5 &8 &7 & 6\\ \hline
(1,3) &7  & 8  & 9& 5& 6\\ \hline
(1,4)& 7 & 6 &15 &5 & 8\\ \hline
(1,5) & 6  &  10 & 6 & 8& 4 \\ \hline
(2,3)& 6 & 5& 6 & 10& 9 \\ \hline
(2,4)& 6 &5 &6 & 5&7 \\ \hline
(2,5) & 8 &10   & 13 & 6&7 \\ \hline
(3,4)& 9 & 7&9 &10 &7 \\ \hline
(3,5)& 7 &  5 & 6& 10& 7 \\ \hline
(4,5)&  7&  10 &8 & 5& 6 \\ \hline
\end{tabular}}
\end{table}

  \section{Concluding remarks}
\label{sec5}

In this paper we first show that a perturbation (it is known~\cite{SPRS20} that the $c$-differential uniformity is not invariant under EA or CCZ equivalence) of the Gold function via a linearized monomial has the property that its $c$-differential uniformity spectrum tends to increase significantly for some $c$. We further  
 propose a new approach for the computation of  the $c$-DDT entries and the $c$-differential  uniformity via characters in the finite field.  We then apply our method for the Gold function perturbed by any linearized polynomial. It is the first such investigation providing exact expressions for the full $c$-DDT table (albeit, in terms of characters on the finite field). 
 We provide detailed computations for the $c$-differential uniformity of a perturbation of the Gold function via linearized binomials, for small dimensions.
 We further propose here that one could look at perturbations of  other PN/APN functions under EA-transformations and investigate their $c$-differential uniformity.

 \end{document}